\newtheorem{thm}{Theorem}[section]
\newtheorem{cor}[thm]{Corollary}
\newtheorem{lem}[thm]{Lemma}
\newtheorem{prop}[thm]{Proposition}
\theoremstyle{definition}
\theoremstyle{remark}
\newtheorem{rem}[thm]{Remark}
\numberwithin{equation}{section}
\newcommand{\R}{\mathbb{R}}
\begin{document}
\title[]
{On critical and supercritical pseudo-relativistic nonlinear Schro\"dinger equations}
\author{Woocheol Choi}
\address{School of Mathematics, Korea Institute for Advanced Study, Seoul 02455, Korea}
\email{wchoi@kias.re.kr}

\author{Younghun Hong}
\address{Department of Mathematics, Yonsei University, Seoul 03722, Korea}
\email{younghun.hong@yonsei.ac.kr}

\author{Jinmyoung Seok}
\address{Department of Mathematics, Kyonggi University, Suwon 16227, Korea}
\email{jmseok@kgu.ac.kr}

\subjclass[2010]{Primary 35J{2p}0}
\keywords{} 

\maketitle
\begin{abstract}
In this paper, we investigate existence and non-existence of a nontrivial solution to the
pseudo-relativistic nonlinear Schr\"odinger equation
\begin{equation*}
\left( \sqrt{-c^2\Delta + m^2 c^4}-mc^2\right) u + \mu u = |u|^{p-1}u\quad \textrm{in}~\mathbb{R}^n~(n \geq 2)
\end{equation*}
 involving an $H^{1/2}$-critical/supercritical power-type nonlinearity, i.e., $p \geq \frac{n+1}{n-1}$. 
We prove that in the non-relativistic regime, there exists a nontrivial solution provided that the nonlinearity is $H^{1/2}$-critical/supercritical but it is $H^1$-subcritical.
On the other hand, we also show that there is no nontrivial bounded solution either $(i)$ if the nonlinearity is $H^{1/2}$-critical/supercritical in the ultra-relativistic regime or $(ii)$ if the nonlinearity is $H^1$-critical/supercritical in all cases.
\end{abstract}


\section{Introduction}
We consider the pseudo-relativistic nonlinear Schr\"odinger equation (NLS)
\begin{equation}\label{pseudo-relativistic NLS}
i\partial_t\psi=\left(\sqrt{-c^2\Delta + m^2 c^4}-mc^2\right)\psi - |\psi|^{p-1}\psi,
\end{equation}
where $\psi=\psi(t,x):\mathbb{R}\times\mathbb{R}^n\to\mathbb{C}$ denotes a wave function, $c>0$ is the speed of light and $m>0$ is the particle mass. The non-local operator $\sqrt{-c^2\Delta + m^2 c^4}-mc^2$ is the pseudo-differential operator defined by the symbol $\sqrt{c^2|\xi|^2+m^2c^4}-mc^2$ which arises from Einstein's energy momentum relation $E^2 = |p|^2c^2 +m^2c^4$. 
The equation \eqref{pseudo-relativistic NLS} formally converges to  the non-relativistic NLS 
$$i\partial_t\psi=-\frac{1}{2m}\Delta\psi - |\psi|^{p-1}\psi$$
as $c\to \infty$, because $\sqrt{c^2|\xi|^2 + m^2 c^4}-mc^2\to\frac{1}{2m}|\xi|^2$ as $c\to\infty$. On the other hand, it converges to the nonlinear half-wave equation
$$i\partial_t\psi= c\sqrt{-\Delta}\psi - |\psi|^{p-1}\psi$$
as $m\to 0$. In this sense, the pseudo-relativistic NLS describes the intermediate dynamics between the classical and the relativistic models. The equation \eqref{pseudo-relativistic NLS} may have completely different characters in the non-relativistic regime $(c^2|\xi|^2 \ll m^2c^4)$, where the relativity is taken in account only weakly, from the ultra-relativistic regime $(c^2|\xi|^2 \gg m^2c^4)$, where the relativity is dominant.

The goal of this article is to find criteria for existence and non-existence of a nontrivial standing wave of the form $\psi(t,x)=e^{i\mu t}u(x)$ to the pseudo-relativistic NLS \eqref{pseudo-relativistic NLS}. To this end, we shall focus on the stationary pseudo-relativistic NLS
\begin{equation}\label{eq-main}
( \sqrt{-c^2\Delta + m^2 c^4}-mc^2) u + \mu u = |u|^{p-1}u,
\end{equation}
where $u=u(x):\mathbb{R}^n\to\mathbb{C}$.

When the nonlinearity is $H^{1/2}$-subcritical, i.e., $1 < p < \frac{n+1}{n-1}$, by a standard variational argument, it is shown that the pseudo-relativistic NLS \eqref{eq-main} admits a nontrivial solution for all $m, c, \mu> 0$ (see \cite{CS} and \cite{CiS, CiS2, CN1, CN2, M, TWY} for the related variational results). However, to the best knowledge of the authors, nothing is known in the $H^{1/2}$-critical/supercritical case, i.e., $p\geq\frac{n+1}{n-1}$, because the standard variational approach does not work well in the supercritical setting.

Nevertheless, there is still a hope to construct a nontrivial solution in the critical/supercritical case. To see this, we recall that as $c\to \infty$, the pseudo-relativistic equation \eqref{eq-main} approaches to the non-relativistic equation, that is, the stationary non-relativistic NLS
\begin{equation}\label{non-relativistic eq}
-\frac{1}{2m}\Delta u + \mu u = |u|^{p-1}u.
\end{equation}
As for existence of a nontrivial solution to \eqref{non-relativistic eq}, there is a dichotomy divided at the $H^1$-criticality \cite{BL, S}. Precisely, a positive radially symmetric bounded solution exists in the $H^1$-subcritical case, i.e., $1< p < \frac{n+2}{n-2}$. Moreover, such a solution is known to be unique \cite{Kw}.  However, by Pohozaev's identities, no nontrivial bounded solution exists in the $H^1$-critical/supercritcal case $p \geq \frac{n+2}{n-2}$.  On the other hand, as $m\to 0$, the equation \eqref{eq-main} approaches to the ultra-relativistic equation, namely the stationary nonlinear half-wave equation
\begin{equation}\label{half-wave eq}
c\sqrt{-\Delta}u+\mu u=|u|^{p-1}u.
\end{equation}
Similarly but differently, for \eqref{half-wave eq}, the dichotomy arises at the $H^{1/2}$-criticality. Indeed, a positive radial solution exists in the $H^{1/2}$-subcritical case, i.e., $1 < p < \frac{n+1}{n-1}$,
and its uniqueness is proved by Frank-Lenzmann for $n = 1$ \cite{FLe} and Frank-Lenzmann-Silvestre for $n\geq 2$ \cite{FLS}, provided that it is also a ground state. However, by Pohozaev's identities again, a bounded nontrivial solution does not exist in the $H^{1/2}$-critical/supercritical case $p\geq\frac{n+1}{n-1}$.
\floatplacement{table}{h}
\begin{table}[]
\centering
\label{known results}
\begin{tabular}{lllll}
\cline{1-4}
\multicolumn{1}{|c|}{} & \multicolumn{1}{c|}{$1 < p < \frac{n+1}{n-1}$}         & \multicolumn{1}{c|}{$\frac{n+1}{n-1} \leq p < \frac{n+2}{n-2}$} & \multicolumn{1}{c|}{$\frac{n+2}{n-2}\leq p<\infty$}             &  \\ \cline{1-4}
\multicolumn{1}{|l|}{non-relativistic NLS \eqref{non-relativistic eq}} & \multicolumn{2}{c|}{existence}                          & \multicolumn{1}{c|}{non-existence} &  \\ \cline{1-4}
\multicolumn{1}{|l|}{half wave equation \eqref{half-wave eq}} & \multicolumn{1}{c|}{existence} & \multicolumn{2}{c|}{non-existence}                          &  \\ \cline{1-4}
                        &                                &                        &                                    & 
\end{tabular}
\caption{Existence and non-existence of a non-trivial solution to \eqref{non-relativistic eq}/\eqref{half-wave eq}}
\end{table}

The above observation suggests a possibility that existence of a nontrivial solution to \eqref{eq-main} can be determined by the criticality of the equation as well as by the parameters $m, c, \mu> 0$. More specifically, from the results in Table \ref{known results} and the connections among the three equations via the formal limits, it is natural to guess that when $\frac{n+1}{n-1}\leq p<\frac{n+2}{n-2}$, a nontrivial solution exists in the non-relativistic regime $c\gg1$, but it does not in the ultra-relativistic regime $m\ll1$. No existence is expected when $p\geq\frac{n+2}{n-2}$.

The first theorem of this paper proves non-existence of a nontrivial solution to \eqref{eq-main}, which fits into Table \ref{known results}.

\begin{thm}[Non-existence]\label{thm-3}
Let $n \geq 2$. Suppose that
$$p \geq \frac{n+1}{n-1}  \quad \text{and} \quad mc^2 \leq \mu$$ or that 
$$p \geq \frac{n+2}{n-2}.$$
Then, there is no nontrivial solution to \eqref{eq-main} in $H^{1/2}(\R^n) \cap L^\infty(\R^n) $.
\end{thm}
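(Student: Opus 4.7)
My plan is to derive a single master Pohozaev-type identity from \eqref{eq-main} and analyze it in the two cases of the theorem. Set $E_c(\xi):=\sqrt{c^2|\xi|^2+m^2c^4}$, $m_c(\xi):=E_c(\xi)-mc^2$, and introduce
$$T:=\int m_c(\xi)|\hat u(\xi)|^2\,d\xi,\quad M:=\|u\|_{L^2}^2,\quad P:=\|u\|_{L^{p+1}}^{p+1},\quad Q:=\int\frac{m^2c^4}{E_c(\xi)}|\hat u(\xi)|^2\,d\xi.$$
Pairing \eqref{eq-main} with $\bar u$ gives at once
$$T+\mu M=P. \qquad (\mathrm{I})$$
For the Pohozaev identity, I first upgrade $u$ to a smooth, sufficiently decaying function by a standard bootstrap (using the resolvent of $\sqrt{-c^2\Delta+m^2c^4}+\nu$ on weighted spaces), so that $x\cdot\nabla\bar u$ is an admissible test function. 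Taking the real part of the pairing with $x\cdot\nabla\bar u$ and evaluating the nonlocal contribution in Fourier via $\widehat{x\cdot\nabla u}=-n\hat u-\xi\cdot\nabla\hat u$ together with $\xi\cdot\nabla m_c(\xi)=c^2|\xi|^2/E_c(\xi)=E_c(\xi)-m^2c^4/E_c(\xi)$, and then eliminating $T$ by $(\mathrm{I})$, produces the master identity
$$(\mu-mc^2)M+Q=\frac{(n+1)-(n-1)p}{p+1}P. \qquad(\star)$$
Equivalently, $(\star)$ is obtained by Pohozaev on the Caffarelli--Silvestre-type extension $v(x,y)$ of $u$ that solves $\partial_y^2v+c^2\Delta_xv-m^2c^4v=0$ on $\mathbb{R}^{n+1}_+$ with $v(x,0)=u(x)$, tested against $x\cdot\nabla_xv+y\partial_yv$.

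Case (a), with $\mu\geq mc^2$ and $p\geq\frac{n+1}{n-1}$, is immediate: the left-hand side of $(\star)$ satisfies $(\mu-mc^2)M+Q\geq Q>0$ for any nontrivial $u$, while the coefficient on the right-hand side is nonpositive, a contradiction. For Case (b), where $p\geq\frac{n+2}{n-2}$ (so necessarily $n\geq 3$), the key extra ingredient is the nonnegativity of $m_c^2/E_c$, which integrates to
$$T+Q-mc^2 M=\int\frac{m_c(\xi)^2}{E_c(\xi)}|\hat u(\xi)|^2\,d\xi>0,$$
strictly for any nontrivial $u$. Using $(\mathrm{I})$ to eliminate $T$ from this inequality and inserting the resulting lower bound for $Q$ into $(\star)$, elementary algebra yields
$$2\mu M<\frac{(n+2)-(n-2)p}{p+1}P.$$
When $p\geq\frac{n+2}{n-2}$, the right-hand side is $\leq 0$; since $\mu M>0$, this is a contradiction.

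The main obstacle is justifying the Pohozaev identity $(\star)$ rigorously under the bare hypothesis $u\in H^{1/2}(\mathbb{R}^n)\cap L^\infty(\mathbb{R}^n)$, which requires a regularity and decay bootstrap for the pseudo-differential equation \eqref{eq-main}, analogous to the Frank--Lenzmann--Silvestre analysis in the subcritical setting. Once $(\star)$ is in hand, the two cases reduce to short algebraic manipulations; the new observation enabling Case (b) is that the nonnegative Fourier weight $m_c^2/E_c$ effectively boosts the critical exponent detected by the identity from $(n+1)/(n-1)$ up to $(n+2)/(n-2)$, thereby covering the $H^1$-critical/supercritical regime regardless of the sign of $\mu-mc^2$.
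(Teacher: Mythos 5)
Your proposal is correct at the formal level and rests on the same Pohozaev-type philosophy as the paper, but its implementation is genuinely different. The paper works on the extension $U$ of $u$ to $\mathbb{R}^{n+1}_+$ solving $(-c^2\Delta_{(x,t)}+\tfrac{c^4}{4})U=0$, derives three identities there (testing with $U$, with $(x,t)\cdot\nabla U$, and with $x\cdot\nabla_x U$), and in the $H^1$-critical/supercritical case additionally invokes the trace inequality $\int_{\mathbb{R}^n}|U(x,0)|^2dx\le 2\|U\|_{L^2(\mathbb{R}^{n+1}_+)}\|\partial_t U\|_{L^2(\mathbb{R}^{n+1}_+)}$ together with Young's inequality to absorb the boundary term when $mc^2>\mu$. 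You instead stay entirely on the Fourier side: your master identity $(\star)$ is correct (it follows from pairing with $x\cdot\nabla\bar u$, the relation $\xi\cdot\nabla m_c=E_c-m^2c^4/E_c$, and elimination of $T$ via $(\mathrm{I})$), case (a) is immediate from $Q>0$, and your case (b) algebra leading to $2\mu M<\tfrac{(n+2)-(n-2)p}{p+1}P$ checks out; moreover your extra ingredient $\int m_c(\xi)^2E_c(\xi)^{-1}|\hat u(\xi)|^2d\xi\ge 0$ is exactly the Fourier-side avatar of the paper's trace-plus-Young step (since $\hat U(\xi,t)=e^{-tE_c(\xi)/c}\hat u(\xi)$, that step reduces to the pointwise inequality $\tfrac{a^2}{2E_c}+\tfrac{E_c}{2}\ge a$), but in your formulation it is a pointwise positivity, so you dispense with the extension, the third identity, and the trace lemma — a cleaner and more elementary reduction once the identities are available. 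What the paper's route buys is a lighter justification burden: its identities only require $U\in H^1(\mathbb{R}^{n+1}_+)$ continuous up to the boundary (from $u\in L^\infty$ plus elliptic regularity) and an exhaustion by half-balls along good radii $R_j\to\infty$, whereas your direct derivation of $(\star)$ needs the pairing with $x\cdot\nabla u$ to be admissible, i.e.\ weighted decay of $u$ and $\nabla u$, which you only sketch via a resolvent bootstrap and correctly flag as the main obstacle; to make this fully rigorous you would either have to carry out the decay argument (regularity bootstrap through $(m_c(D)+\mu)^{-1}$ plus decay of its kernel, in the spirit of Frank--Lenzmann--Silvestre) or derive $(\star)$ through the extension as you suggest, which essentially reproduces the paper's justification (itself only sketched there). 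With that caveat, your argument is a valid and somewhat more streamlined proof of both cases.
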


We show Theorem \ref{thm-3} by exploiting the Pohozaev-type identities on the extension problem of \eqref{eq-main} to the upper half-plane. We note that if $\frac{n+1}{n-1}\leq p<\frac{n+2}{n-2}$, this approach does not work when $mc^2>\mu$. 

The next theorem, which is the main contribution of this article, provides an affirmative answer for the existence part.

\begin{thm}[Existence]\label{existence}
Let $n\geq 2$. Suppose that
\begin{align*}
\frac{n+1}{n-1} \leq p<\frac{n+2}{n-2}.
\end{align*}
Then, there exists $\kappa_0\geq 1$ such that if
$${mc^2}\geq \kappa_0{\mu},$$
then the pseudo-relativistic NLS \eqref{eq-main} has a nontrivial solution $u_c \in H^1_r(\R^n) \cap L^\infty(\R^n)$. 
\end{thm}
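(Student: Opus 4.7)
My plan is to construct $u_c$ as a perturbation of the non-relativistic ground state in the regime $c\to\infty$. Holding $m,\mu>0$ fixed, the hypothesis $mc^2\geq\kappa_0\mu$ quantifies the non-relativistic regime. As $c\to\infty$, the Fourier multiplier $\mathcal{L}_c:=\sqrt{-c^2\Delta+m^2c^4}-mc^2$ approaches $-\tfrac{1}{2m}\Delta$ via the pointwise identity
\begin{equation*}
\tfrac{|\xi|^2}{2m}-\bigl(\sqrt{c^2|\xi|^2+m^2c^4}-mc^2\bigr)=\frac{|\xi|^4}{2m^3c^2\bigl(\sqrt{1+|\xi|^2/(m^2c^2)}+1\bigr)^2}\geq 0,
\end{equation*}
so \eqref{eq-main} is a small deformation of the classical NLS \eqref{non-relativistic eq}. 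Since $1<p<\tfrac{n+2}{n-2}$, the latter admits a unique positive radial ground state $u_0\in H^1_r(\R^n)\cap L^\infty(\R^n)$ by Berestycki-Lions and Kwong, which is smooth and exponentially decaying. Moreover, by Weinstein's non-degeneracy, $\ker L_0=\mathrm{span}\{\partial_i u_0\}_{i=1}^n$ in $H^1$, where $L_0:=-\tfrac{1}{2m}\Delta+\mu-pu_0^{p-1}$; in particular $L_0|_{H^1_r}$ is invertible.

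Writing $u_c=u_0+w_c$ and using the equation satisfied by $u_0$ reduces \eqref{eq-main} to
\begin{equation*}
L_c w_c = R_c + N(w_c), \qquad L_c := \mathcal{L}_c+\mu-pu_0^{p-1},
\end{equation*}
with linear residual $R_c:=-\bigl(\mathcal{L}_c-(-\tfrac{1}{2m}\Delta)\bigr)u_0$ and quadratic remainder $N(w):=|u_0+w|^{p-1}(u_0+w)-u_0^p-pu_0^{p-1}w$. The Fourier identity above, combined with the Schwartz-type decay of $\widehat{u_0}$, yields $\|R_c\|_{L^2}=O(c^{-2})$. Since $p<\tfrac{n+2}{n-2}$ is $H^1$-subcritical, a Taylor-type estimate on a small ball of $X:=H^1_r\cap L^\infty$ gives $\|N(w_1)-N(w_2)\|_{L^2}\lesssim(\|w_1\|_X+\|w_2\|_X)^\alpha\|w_1-w_2\|_X$ with $\alpha:=\min(1,p-1)>0$, the $L^\infty$-factor accommodating the pointwise structure of $|u|^{p-1}u$ for non-integer $p$. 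Granting a uniform-in-$c$ resolvent bound for $L_c$ on the radial subspace (see below), the map $\Phi(w):=L_c^{-1}(R_c+N(w))$ is a contraction on a small ball of $X$ for $c$ large, yielding $w_c$; an elliptic bootstrap based on the smoothing of $(\mathcal{L}_c+\mu)^{-1}$ then confirms $u_c=u_0+w_c\in H^1_r\cap L^\infty$.

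The main obstacle is the uniform invertibility of $L_c|_{H^1_r}$, because $\mathcal{L}_c\to-\tfrac{1}{2m}\Delta$ only in the \emph{strong} operator topology: at frequencies $|\xi|\gtrsim mc$ the symbols disagree like $c|\xi|$ versus $|\xi|^2/(2m)$, so norm convergence fails. I plan to handle this via a compactness-contradiction argument. Suppose uniform invertibility fails; then there exist $c_n\to\infty$ and radial unit vectors $u_n\in H^1_r$ with $L_{c_n}u_n\to 0$. The compact embedding $H^1_r(\R^n)\hookrightarrow L^q(\R^n)$ for $2<q<\tfrac{2n}{n-2}$ yields a weak limit $u_*$ with $pu_0^{p-1}u_n\to pu_0^{p-1}u_*$ strongly in $L^2$; dominated convergence (using the upper bound $\sigma_c\leq|\xi|^2/(2m)$) then gives $\mathcal{L}_{c_n}u_n\rightharpoonup-\tfrac{1}{2m}\Delta u_*$, so $L_0 u_*=0$ and hence $u_*=0$ by non-degeneracy. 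Ruling out $H^1$-norm concentration at infinity---the most delicate step, requiring a careful analysis of the $c|\xi|$-regime of $\sigma_c$---will contradict $\|u_n\|_{H^1}=1$. A secondary minor issue is the lack of $C^2$-smoothness of $|u|^{p-1}u$ for non-integer $p<2$; this is absorbed into the H\"older exponent $\alpha=p-1$ in the nonlinear estimate.
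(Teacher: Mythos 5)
Your overall strategy -- perturb off the non-relativistic ground state $u_\infty$, use its non-degeneracy to invert the linearized operator, and close a contraction -- is precisely the paper's approach. But there are two places where your proposal has genuine gaps, one of which is the technical heart of the paper and which, as written, would restrict your argument to a strictly smaller range of $p$ than the theorem claims.

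\textbf{The function space does not close the iteration.} You set $X := H^1_r\cap L^\infty$ and estimate the nonlinear remainder in $L^2$: $\|N(w_1)-N(w_2)\|_{L^2}\lesssim(\|w_1\|_X+\|w_2\|_X)^\alpha\|w_1-w_2\|_X$. Then $\Phi(w)=L_c^{-1}(R_c+N(w))$, and you need $\Phi$ to map a small ball of $X$ into itself. But $L_c^{-1}$ gains exactly one derivative uniformly in $c$ -- the symbol $P_c(\xi)$ grows only like $c|\xi|$ for $|\xi|\gtrsim c$, so $P_c(D)^{-1}:L^2\to H^2$ is \emph{not} uniform -- hence from $N(w)\in L^2$ you get $\Phi(w)\in H^1$, and $H^1\not\hookrightarrow L^\infty$ for $n\geq 2$. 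The $L^\infty$ component of $X$ is lost after one iteration, so the map $\Phi$ does not return to $X$. Moreover, the $L^\infty$ control is indispensable for the nonlinear estimate itself once $p>\frac{n}{n-2}$: with only $w\in H^1$, $|u_\infty+w|^p$ is controlled in $L^{2^*/p}$, which sits in $L^2$ only when $p\leq\frac{n}{n-2}$. Your "elliptic bootstrap" remark does not repair this, because regularity of the fixed point is moot if the contraction has no fixed point to begin with. The paper resolves this by working in $H^1_r\cap W^{1,q}$ with $q>n$ (so that Morrey gives the $L^\infty$ control), and this requires uniform-in-$c$ boundedness of $P_c(D)^{-1}:L^q\to W^{1,q}$ for exponents $q\neq 2$. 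That $L^q$ bound is not a pointwise symbol estimate; it requires controlling all derivatives $\nabla^\alpha_\xi$ of the symbol $P_c(\xi)/P_\infty(\xi)$ uniformly in $c$ and invoking H\"ormander--Mikhlin, which is the content of Theorem \ref{norm comparability} and Proposition \ref{symbol fraction}. Without this, the argument is restricted to $1<p<\frac{n}{n-2}$ (this is precisely the paper's Remark \ref{Why Hormander-Mikhlin}), which does not cover the stated range $\frac{n+1}{n-1}\leq p<\frac{n+2}{n-2}$ when $n\geq 3$.

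\textbf{The uniform invertibility step is not carried out.} You propose a compactness-contradiction argument for the uniform bound on $L_c^{-1}$ and candidly flag "ruling out $H^1$-norm concentration at infinity" as the most delicate step without completing it. A compactness argument of this type can in fact be made to work on $L^2_r\to H^1_r$: normalize $\|u_n\|_{H^1}=1$ with $L_{c_n}u_n\to 0$, use Strauss compactness and the exponential decay of $u_\infty^{p-1}$ to pass the potential term to the limit, and use the uniform symbol bound $\sigma_c(\xi)+\mu\gtrsim\langle\xi\rangle$ to rule out high-frequency leakage. However, even if you do this, you only obtain the $L^2\to H^1$ bound, which by the previous paragraph is not the bound you actually need -- the contraction runs through $L^q\to W^{1,q}$ with $q>n$, and Strauss compactness does not deliver that. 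The paper sidesteps compactness entirely: it writes $\mathcal{L}_{c;\infty}=\{\mathrm{Id}+pu_\infty^{p-1}(P_\infty(D)^{-1}-P_c(D)^{-1})\mathcal{A}^{-1}\}\mathcal{A}P_c(D)$ with $\mathcal{A}:=\mathrm{Id}-pu_\infty^{p-1}P_\infty(D)^{-1}$, inverts $\mathcal{A}$ once by Fredholm plus non-degeneracy (a $c$-independent statement), and then invokes $\|P_\infty(D)^{-1}-P_c(D)^{-1}\|_{\mathcal{L}(L^q)}=O(c^{-2})$ -- again a H\"ormander--Mikhlin symbol estimate -- to make the remaining factor a Neumann series. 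This gives the $L^q\to W^{1,q}$ invertibility in one stroke and with an explicit constant. I'd encourage you to adopt that algebraic decomposition, and to recognize that the real work is in the $L^q$ multiplier bounds for $P_c(D)^{\pm 1}$, not in the abstract inversion.
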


Even though a lot of works have been devoted to the pseudo-relativistic NLS \eqref{eq-main}, to the best knowledge of the authors, Theorem \ref{existence} is the first result in the literature, proving existence of its non-trivial solution in the supercritical setting. Another important remark is that only the quantity $\frac{mc^2}{\mu}$, not all three independent parameters $m,c,\mu>0$, determines the regime of the equation concerning existence and non-existence of a nontrivial solution. 

Due to supercriticality of the problem, it is difficult to apply the standard variational method. Indeed, the action functional (see \eqref{action functional}) is indefinite in this case. Cutting off the nonlinearity is also not suitable in this setting. 
Even in the $H^{1/2}$-critical case, due to the lack of $L^2$-convergence of Palais-Smale sequences, it looks impossible to apply a well-known method that characterizes energy sub-levels such that the Palais-Smale condition holds and subsequently tests a family of extremal functions of the Sobolev inequality.

To overcome the aforementioned difficulty, we employ the following non-variational approach, combined with the uniform $L^q$-estimates for the pseudo-differential operator $\sqrt{-c^2\Delta +m^2c^4}-mc^2$. First, by some algebraic manipulation for notational simplicity, we reduce to the case $m=\frac{1}{2}$ and $\mu=1$, and consider 
\begin{equation}\label{eq-3-9}
P_c (D) u= |u|^{p-1} u,
\end{equation}
where
$$P_c (D)=\left(\sqrt{ -c^2 \Delta + \frac{c^4}{4}} -\frac{c^2}{2}\right)+1$$
(see Section \ref{algebra}).  Next, we choose a ground state $u_\infty$ to the non-relativistic limit equation
\begin{equation}\label{eq-3-9 limit}
-\Delta u+u=|u|^{p-1}u.
\end{equation}
Considering the $H^{1/2}$-supercritical pseudo-relativistic equation \eqref{eq-3-9} as a perturbation of the $H^1$-subcritical non-relativistic equation \eqref{eq-3-9 limit}, we formulate an equation for the perturbation $w$ from the ground state $u_\infty$ (see \eqref{eq w''}). 
Then, we establish existence of a solution $w$ via the contraction mapping principle (see Theorem \ref{existence and uniqueness}). 
The main advantage of this approach is that we may take the full advantage of extra properties of the ground state $u_\infty$, including its smoothness and decay. 
In particular, the non-degeneracy of the linearized operator $\mathcal{L}_\infty$ about the ground state $u_\infty$ (see \eqref{non-degeneracy}) plays a crucial role in this procedure. This kind of perturbation argument has been employed previously in the literature for other problems. For example, we refer to \cite{CGG, G, O} for the nonlinear Dirac equation and to \cite{MMP} for the nonlinear Schr\"odinger equation with slightly supercritical nonlinearity.

Another new ingredient of our analysis is to use the $L^q$-estimates for the pseudo-differential operator $P_c (D)$ based on the symbolic analysis. Indeed, for our contraction mapping argument, it is important to find a uniform (in $c\geq2$) boundedness of the inverse operator $P_c (D)^{-1}: L^q \rightarrow W^{1,q}$. In the special case $q=2$, such an estimate immediately follows from a simple pointwise bound on the symbol \cite[Lemma 4.3]{CHS}. In this paper, we obtain the following extended inequalities covering all exponents $1<q<\infty$.

\begin{thm}[Norm comparability]\label{norm comparability}
For $1<q<\infty$, there exists a constant $C_{q,n} >0$ such that for $c\geq 2$, 
\begin{equation*}
C_{q,n}^{-1} \| f\|_{W^{1,q}} \leq \|P_c (D) f\|_{L^q} \leq C_{q,n}\|f\|_{W^{2,q}}
\end{equation*}
\end{thm}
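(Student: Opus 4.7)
The plan is to realize both inequalities as uniform $L^q$-boundedness statements for Fourier multipliers and then to apply the Mikhlin--H\"ormander multiplier theorem. First, recalling that for $1<q<\infty$ the Sobolev norm $\|f\|_{W^{k,q}}$ is equivalent to $\|(I-\Delta)^{k/2}f\|_{L^q}$ (Bessel potential theory), and writing $p_c(\xi) := \bigl(\sqrt{c^2|\xi|^2+c^4/4}-c^2/2\bigr)+1$ for the symbol of $P_c(D)$, the two bounds reduce respectively to the uniform $L^q \to L^q$ boundedness (in $c\geq 2$) of the Fourier multipliers
$$m_c^+(\xi) := \frac{p_c(\xi)}{1+|\xi|^2}, \qquad m_c^-(\xi) := \frac{\sqrt{1+|\xi|^2}}{p_c(\xi)}.$$

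The second step is to put $p_c$ into a form that makes the $c$-dependence transparent. Rationalizing,
$$\sqrt{c^2|\xi|^2+\tfrac{c^4}{4}}-\tfrac{c^2}{2} \;=\; |\xi|^2\, h\!\left(\tfrac{|\xi|^2}{c^2}\right), \qquad h(s) := \frac{1}{\sqrt{s+1/4}+1/2},$$
so $p_c(\xi) = 1 + |\xi|^2\, h(|\xi|^2/c^2)$, where $h \in C^\infty([0,\infty))$ satisfies $h(0)=1$, $h(s)\sim s^{-1/2}$ as $s\to\infty$, and $|h^{(k)}(s)| \leq C_k (1+s)^{-1/2-k}$ for every $k\geq 0$. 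This immediately exposes the two-scale behavior $p_c(\xi) \sim 1+|\xi|^2$ on $\{|\xi|\leq c\}$ and $p_c(\xi) \sim c|\xi|$ on $\{|\xi|\geq c\}$, and a short case check delivers the uniform lower bound $p_c(\xi) \geq C^{-1}\sqrt{1+|\xi|^2}$ for all $c\geq 2$, so that $m_c^-$ is pointwise bounded; the same regime analysis bounds $m_c^+$ pointwise.

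The third step is to verify the Mikhlin--H\"ormander condition
$$|\xi|^{|\alpha|}\,\bigl|\partial_\xi^\alpha m_c^\pm(\xi)\bigr| \;\leq\; C_\alpha, \qquad |\alpha| \leq \lfloor n/2\rfloor + 1,$$
with $C_\alpha$ independent of $c\geq 2$. Differentiating through the chain and Fa\`a di Bruno formulas, each $\xi$-derivative of $h(|\xi|^2/c^2)$ brings a factor of $c^{-2}$ that is balanced by the $(1+|\xi|^2/c^2)^{-1/2-k}$ decay of $h^{(k)}$. I would estimate the resulting derivatives separately on $\{|\xi|\leq c\}$, where $h$ and its $\xi$-derivatives act like a bounded smooth function with controlled derivatives and $p_c$ behaves like $1+|\xi|^2$ up to harmless smooth factors, and on $\{|\xi|\geq c\}$, where the decay of $h^{(k)}$ supplies the extra $|\xi|^{-|\alpha|}$ one needs. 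The companion estimate for $m_c^-$ uses the Leibniz rule and the uniform ellipticity $p_c\gtrsim \sqrt{1+|\xi|^2}$. Once the Mikhlin--H\"ormander condition is in force with $c$-independent constants, the multiplier theorem delivers the two claimed inequalities simultaneously for every $1<q<\infty$.

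The main obstacle is precisely this uniformity in $c$: for each fixed $c$ the symbol $p_c$ is an elliptic symbol of order $1$ and classical pseudodifferential calculus yields the bounds, but extracting constants that do not degenerate as $c\to\infty$ requires the scaling-adapted representation $p_c(\xi)=1+|\xi|^2 h(|\xi|^2/c^2)$ together with the precise decay $|h^{(k)}(s)|\lesssim (1+s)^{-1/2-k}$, so that the non-relativistic regime $|\xi|/c\ll 1$ (where $P_c(D)$ acts like $I-\Delta$) and the ultra-relativistic regime $|\xi|/c\gg 1$ (where $P_c(D)$ acts like $c\sqrt{-\Delta}$) are handled inside a single multiplier estimate.
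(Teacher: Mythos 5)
Your argument is correct and reaches the same conclusion, but by a genuinely different technical route than the paper. Both proofs reduce to a uniform (in $c$) Mikhlin--H\"ormander estimate, but the paper organizes the work around the two symbols $\tfrac{1}{P_\infty}-\tfrac{1}{P_c}$ and $\tfrac{P_c}{P_\infty}$: the upper bound comes directly from the second, while the lower bound is obtained indirectly, by first showing $\|\tfrac{1}{P_c(D)}f\|_{L^q}\lesssim\|f\|_{W^{-1,q}}$ via the triangle inequality and the difference estimate, then substituting $f\mapsto P_c(D)\sqrt{P_\infty(D)}\,f$. You instead attack both inequalities symmetrically through the multipliers $m_c^+=\tfrac{p_c}{1+|\xi|^2}$ and $m_c^-=\tfrac{\sqrt{1+|\xi|^2}}{p_c}$, which is cleaner and avoids the detour through negative-order Sobolev norms. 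The more substantial difference is how the Mikhlin condition is verified with $c$-uniform constants: the paper runs an induction showing $\nabla_\xi^\alpha$ of the relevant quotients is always a sum of products drawn from a fixed list of ``factor types,'' each producing an extra $\tfrac{C}{|\xi|}$ upon differentiation; you instead rationalize to get the closed form $p_c(\xi)=1+|\xi|^2\,h(|\xi|^2/c^2)$ with $|h^{(k)}(s)|\lesssim (1+s)^{-1/2-k}$, which makes the two-regime scaling $|\xi|\lessgtr c$ and the $c$-uniformity transparent without any inductive bookkeeping. Your representation is arguably the more elegant device; the paper's choice of $\tfrac{1}{P_\infty}-\tfrac{1}{P_c}$ has the compensating advantage that the same symbol estimates simultaneously prove the operator-difference bounds of Theorem~\ref{thm difference}, which are needed later (e.g.\ in Lemmas~\ref{invertibility lemma} and~\ref{lem-2-2}), so if you followed your route throughout you would still need a separate argument of that type.
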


\begin{rem}
Since $P_c(D)$ is a first-order elliptic operator, it is obvious that
$$C_{q,n, c}^{-1}\|f\|_{W^{1,q}}\leq \|P_c(D) f\|_{L^q}\leq C_{q,n,c}\|f\|_{W^{1,q}},$$
with $C_{q,n,c}>0$ depending on $c \geq 2$. Contrary to these trivial inequalities, Theorem \ref{norm comparability} provides upper and lower bounds uniformly in $c\geq 2$. Note that the upper bound in Theorem \ref{norm comparability} is optimal, because $\|P_c(D) f\|_{L^q}\to \|(-\Delta+1) f\|_{L^q}$ as $c\to \infty$ for $f\in C_c^\infty$.
\end{rem}

We prove Theorem \ref{norm comparability} by the H\"ormander-Mikhlin theorem. For this aim, we carefully estimate the derivatives of the associated symbols. We here also prove that for sufficiently large $c\geq 1$, the inverse of the pseudo-relativistic operator is close to that of the non-relativistic operator as operators acting on $L^q$ (see Theorem \ref{thm difference}). Theorem \ref{norm comparability} and \ref{thm difference} are employed to obtain existence in the full $H^1$-subcritical range. Indeed, without these extended inequalities, only a narrow range of nonlinearities, $1<p<\frac{n}{n-2}$, is covered (see Remark \ref{Why Hormander-Mikhlin}). 



The rest of this paper is organized as follows. In Section \ref{sec: symbol calculus}, we establish several mapping properties for the pseudo-differential operator $P_c (D)$. Given those properties, in Section \ref{sec: existence result}, we prove the existence result (Theorem \ref{existence}). Section \ref{sec-4} is devoted to establish non-existence (Theorem \ref{thm-3}). 
Finally, in Section \ref{sec-5}, we discuss some properties of the solution constructed previously and propose an open question related to these properties.    

\subsection{Acknowledgement}. This research of the second author was supported by Basic Science Research Program through the National Research Foundation of Korea(NRF) funded by the Ministry of Education (NRF-2017R1C1B1008215).

\section{Symbol calculus for the pseudo-relativistic Schr\"odinger operator}\label{sec: symbol calculus}

Given a symbol $\mathfrak{m}:\mathbb{R}^d\to\mathbb{R}$, the associated Fourier multiplier operator $\mathfrak{m}(D)$ is defined by 
$$\widehat{\mathfrak{m}(D) f} (\xi) = \mathfrak{m} (\xi)\hat{f} (\xi).$$
We introduce the pseudo-differential operator $P_c(D)$ (or $P_\infty(D)$, respectively) as the Fourier multiplier with the symbol 
\begin{equation}\label{def: P_c, P_infty}
P_c(\xi):=\left(\sqrt{c^2|\xi|^2+\frac{c^4}{4}}-\frac{c^2}{2}\right)+1\quad\left(P_\infty(\xi):=|\xi|^2+1, \textup{respectively}\right).
\end{equation}
The purpose of this section is to provide the connection between these two operators. Precisely, we show that as inverse operators, $P_c(D)$ converges to $P_\infty(D)$ as $c\to \infty$ (Theorem \ref{thm difference} below). Here, we also prove the norm comparability (Theorem \ref{norm comparability}).

\begin{thm}[Difference between the inverses of two operators]\label{thm difference}
\mbox{~}
\begin{enumerate}
\item For $1<q<\infty$, there exists a constant $C_{q,n} >0$ such that
\begin{equation}\label{symbol estimate 1}
\left\|\left(\frac{1}{P_\infty(D)}-\frac{1}{P_c(D)}\right)f\right\|_{L^q}\leq \frac{C_{q,n}}{c^2}\|f\|_{L^q}\quad \forall\, c \in [2,\infty).
\end{equation}
\item For $1<q<\infty$, there exists a constant $C_{q,n} >0$ such that
\begin{equation}\label{symbol estimate 2}
\left\|\left(\frac{1}{P_\infty(D)}-\frac{1}{P_c(D)}\right)f\right\|_{L^q}\leq \frac{C_{q,n}}{c}\left\|\frac{1}{P_\infty(D)^{\frac{1}{2}}}f\right\|_{L^q}\quad\forall\, c \in [2,\infty).
\end{equation}
\end{enumerate}
\end{thm}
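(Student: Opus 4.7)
The plan is to establish both estimates via the Hörmander–Mikhlin multiplier theorem applied to the symbols
\[
m_1(\xi) := \frac{1}{P_\infty(\xi)} - \frac{1}{P_c(\xi)}, \qquad m_2(\xi) := P_\infty(\xi)^{1/2}\, m_1(\xi),
\]
verifying that their Mikhlin seminorms are bounded by $C/c^2$ and $C/c$ respectively, uniformly in $c\geq 2$. The second bound immediately yields \eqref{symbol estimate 2} after the substitution $f = P_\infty(D)^{1/2} g$, noting that $P_\infty(D)^{-1/2}$ is a bounded Bessel potential on $L^q$; and the first bound gives \eqref{symbol estimate 1}.

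The starting point is the elementary identity $A^2 - B^2 = -|\xi|^4$ where $A(\xi):=\sqrt{c^2|\xi|^2+c^4/4}$ and $B(\xi):=c^2/2+|\xi|^2$, which rearranges to
\[
P_c(\xi) - P_\infty(\xi) \;=\; A(\xi) - B(\xi) \;=\; -\frac{|\xi|^4}{A(\xi)+B(\xi)}, \qquad m_1(\xi) \;=\; -\frac{|\xi|^4}{P_\infty(\xi)\,P_c(\xi)\,(A(\xi)+B(\xi))}.
\]
Since $A+B\geq c^2$, $P_c(\xi)\sim |\xi|^2+1$ for $|\xi|\leq c/2$, and $P_c(\xi)\sim c|\xi|$ for $|\xi|\geq c/2$, a direct two-regime size estimate produces the pointwise bounds $|m_1(\xi)|\leq Cc^{-2}$ and $|m_2(\xi)|\leq Cc^{-1}$.

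To upgrade these to derivative bounds $|\xi|^{|\alpha|}|\partial^\alpha m_j(\xi)|\leq C_\alpha c^{-2}$ (resp.\ $C_\alpha c^{-1}$), I would first prove auxiliary uniform-in-$c$ Mikhlin symbol estimates
\[
|\xi|^{|\alpha|}\,|\partial^\alpha P_c(\xi)^{\pm 1}| \;\lesssim_\alpha\; P_c(\xi)^{\pm 1}, \qquad |\xi|^{|\alpha|}\,|\partial^\alpha (A(\xi)+B(\xi))^{-1}| \;\lesssim_\alpha\; (A(\xi)+B(\xi))^{-1},
\]
together with the classical $|\xi|^{|\alpha|}|\partial^\alpha P_\infty(\xi)^{\pm s}|\lesssim P_\infty(\xi)^{\pm s}$. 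These follow by induction on $|\alpha|$ from the explicit formulas $\partial_i A = c^2\xi_i/A$, $\partial_i\partial_j A = c^2\delta_{ij}/A - c^4\xi_i\xi_j/A^3$, etc., exploiting the structural behavior of $A$ as $\sim c^2/2+|\xi|^2$ at low frequency and $\sim c|\xi|$ at high frequency. Applying the Leibniz rule to the factored representation of $m_1$ and $m_2$, every resulting summand is bounded by the pointwise size of $m_j$ times $|\xi|^{-|\alpha|}$; a routine case split between $|\xi|\leq c/2$ and $|\xi|\geq c/2$ shows that the bare numerator $|\xi|^4$ is fully absorbed by the inverse factors so that every term stays bounded by $C_\alpha c^{-2}|\xi|^{-|\alpha|}$ in (1) and by $C_\alpha c^{-1}|\xi|^{-|\alpha|}$ in (2). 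Hörmander–Mikhlin then closes the argument.

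The main obstacle is the uniform-in-$c$ bookkeeping: $P_c$ transitions from an order-$2$ elliptic symbol at low frequency to an order-$1$ elliptic symbol at high frequency across the interface $|\xi|\sim c$, and one must verify that every differentiation genuinely gains one power of $|\xi|^{-1}$ in both regimes without ever losing the explicit $c^{-2}$ (or $c^{-1}$) prefactor built into $m_j$. Once the individual factor estimates are organized regime by regime, both \eqref{symbol estimate 1} and \eqref{symbol estimate 2} follow from a direct application of Hörmander–Mikhlin.
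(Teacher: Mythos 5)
Your proposal is correct, and it reaches the same place as the paper (Hörmander--Mikhlin applied to the symbols $m_1$ and $m_2 = P_\infty^{1/2} m_1$), but it organizes the symbol analysis differently. The paper's base pointwise estimate $|P_c(\xi)-P_\infty(\xi)|\leq |\xi|^4/c^2$ is obtained via Taylor expansion of $\sqrt{1+t}$ (Lemma \ref{P_c comparison'}); you instead exploit the exact algebraic identity $P_c(\xi)-P_\infty(\xi)=A(\xi)-B(\xi)=-|\xi|^4/(A(\xi)+B(\xi))$ with $A=\sqrt{c^2|\xi|^2+c^4/4}$ and $B=c^2/2+|\xi|^2$, which makes the $c^{-2}$ decay manifest from $A+B\geq c^2$ and, more importantly, gives a fully factorized representation $m_1 = -|\xi|^4/(P_\infty P_c(A+B))$. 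This changes the derivative bookkeeping: the paper proves by induction that $\nabla^\alpha a(\xi)$ belongs to a closed family of products of certain factor types and that differentiating any one factor produces an extra factor $\lesssim |\xi|^{-1}$; you instead establish standalone uniform-in-$c$ Mikhlin estimates $|\xi|^{|\alpha|}|\partial^\alpha X|\lesssim_\alpha |X|$ for each elementary factor $X\in\{|\xi|^4,\,P_\infty^{\pm1/2},\,P_c^{-1},\,(A+B)^{-1}\}$ and then close by a single application of Leibniz, since every resulting summand is bounded by $|\xi|^{-|\alpha|}|m_j(\xi)|$ together with the pointwise size of $m_j$. Your route is somewhat cleaner and more modular (the individual-factor Mikhlin bounds are reusable facts, and the explicit rational expression replaces the Taylor remainder), while the paper's inductive-closure argument is perhaps more robust to symbol variants where no such clean algebraic identity is available. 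The two-regime comparisons ($|\xi|\lesssim c$ vs.\ $|\xi|\gtrsim c$) and the reduction of \eqref{symbol estimate 2} via $f\mapsto P_\infty(D)^{1/2}g$ are the same in both arguments.
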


For the proof, we recall the H\"ormander-Mikhlin multiplier theorem (see \cite{Gr} for instance).
\begin{thm}[H\"ormander-Mikhlin]\label{thm-2-2} Suppose that $\mathfrak{m} : \mathbb{R}^n \rightarrow \mathbb{R}$ satisfies 
\begin{equation*}
|\nabla^{\alpha} \mathfrak{m} (\xi)| \leq \frac{B_{\alpha}}{|\xi|^{|\alpha|}}\quad \forall \xi \in \mathbb{R}^n \setminus \{0\}
\end{equation*}
for all multi-indices $\alpha \in (\mathbb{Z}_{\geq 0})^n$ such that $0\leq |\alpha| \leq \frac{n}{2} +1$, where $\mathbb{Z}_{\geq 0}=\{0,1,2,3\cdots \}$. Then for any $1<q<\infty$, there exists a constant $C_{q,n} >0$ such that 
\begin{equation*}
\|\mathfrak{m}(D) f\|_{L^q} \leq C_{q,n} \left( \sup_{0 \leq |\alpha| \leq \frac{n}{2} + 1} B_{\alpha}\right) \|f\|_{L^q}.
\end{equation*}
\end{thm}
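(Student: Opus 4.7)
The plan is to combine Plancherel's theorem (for the $L^2$ endpoint) with Calder\'on--Zygmund theory applied to the convolution kernel $K=\mathcal{F}^{-1}\mathfrak{m}$. The key step is a Littlewood--Paley decomposition that converts the pointwise derivative hypothesis on $\mathfrak{m}$ into integral regularity for $K$.

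First I would fix a smooth radial bump $\psi$ supported in $\{1/2 \leq |\xi| \leq 2\}$ with $\sum_{j\in\mathbb{Z}} \psi(\xi/2^j)=1$ for $\xi\neq 0$, and set $\mathfrak{m}_j(\xi)=\mathfrak{m}(\xi)\psi(\xi/2^j)$, $K_j=\mathcal{F}^{-1}\mathfrak{m}_j$. The scaling of $\psi(\xi/2^j)$ combined with the assumption on $\mathfrak{m}$ gives $\|\nabla^\alpha \mathfrak{m}_j\|_{L^\infty}\leq C\, 2^{-j|\alpha|}$ for all $|\alpha|\leq \lfloor n/2\rfloor+1$, supported on $\{|\xi|\sim 2^j\}$. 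Translating symbol decay into spatial decay via $x^\alpha K_j(x) = c_n\mathcal{F}^{-1}(\partial^\alpha \mathfrak{m}_j)(x)$ and Plancherel then yields the weighted $L^2$ estimate
\begin{equation*}
\int_{\mathbb{R}^n} (1+2^j|x|)^{2s}|K_j(x)|^2\,dx \leq C\,2^{jn}, \qquad 0 \leq s \leq \lfloor n/2\rfloor+1.
\end{equation*}
Choosing $s > n/2$ and applying Cauchy--Schwarz against the integrable weight $(1+2^j|x|)^{-2s}$ produces uniform-in-$j$ kernel bounds $\|K_j\|_{L^1}\leq C$, $\int_{|x|>R}|K_j(x)|\,dx \leq C(2^jR)^{-\varepsilon}$, and $\|\nabla K_j\|_{L^1}\leq C\,2^j$.

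Next I would verify the H\"ormander regularity of $K=\sum_j K_j$ by splitting the dyadic sum at the scale $2^{j_0}\sim |y|^{-1}$: for $j \leq j_0$ the mean value theorem together with the gradient bound controls $\int |K_j(x-y)-K_j(x)|\,dx$, while for $j>j_0$ the tail estimate handles $\int_{|x|>2|y|}|K_j(x)|\,dx$; both pieces sum geometrically to give a bound independent of $y$. Combined with the trivial $L^2$ inequality $\|\mathfrak{m}(D)f\|_{L^2}\leq B_0\|f\|_{L^2}$ from Plancherel, the Calder\'on--Zygmund theorem delivers weak-type $(1,1)$, and Marcinkiewicz interpolation plus duality extend this to $L^q$ for all $1<q<\infty$, with the constant tracking $\sup_{|\alpha|\leq \lfloor n/2\rfloor+1}B_\alpha$.

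The delicate point I expect is the exact threshold $\lfloor n/2\rfloor+1$: one needs just enough derivatives of $\mathfrak{m}$ to produce spatial decay of order $s>n/2$, which is precisely what makes Cauchy--Schwarz against $(1+2^j|x|)^{-2s}$ integrable. Once this threshold is crossed, the remaining work is geometric-series bookkeeping across dyadic scales.
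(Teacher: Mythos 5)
The paper does not prove Theorem~\ref{thm-2-2}; it is the classical H\"ormander--Mikhlin multiplier theorem, stated with a citation to Grafakos \cite{Gr} and used as a black box. Your argument is the standard textbook proof (Littlewood--Paley decomposition of the symbol, weighted $L^2$ estimates on the dyadic kernel pieces via Plancherel, Cauchy--Schwarz against $(1+2^j|x|)^{-2s}$ with $s=\lfloor n/2\rfloor+1>n/2$, the H\"ormander integral condition on the summed kernel, and then Calder\'on--Zygmund theory plus interpolation and duality), and it is correct; it is precisely the proof one would find in the cited reference, so there is nothing to compare against in the paper itself.
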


By the H\"ormander-Mikhlin multiplier theorem, the proofs of Theorem \ref{thm difference} and Theorem \ref{norm comparability} are reduced to those of the following bounds on the derivatives of the symbols.
\begin{prop}\label{symbol fraction}\mbox{~}
\begin{enumerate}
\item 
For any multi-index $\alpha=(\alpha_1,\cdots, \alpha_n)\in (\mathbb{Z}_{\geq 0})^n$, there is a constant $C_{\alpha}>0$ such that for all $c \in [2,\infty)$,
\begin{equation}\label{eq-7-2}\left|\nabla_\xi^\alpha \left(\frac{1}{P_\infty(\xi)}-\frac{1}{P_c(\xi)}\right)\right|\leq\frac{C_\alpha}{|\xi|^{|\alpha|}}\min\left\{\frac{1}{c^2}, \frac{1}{c(|\xi|^2+1)^{\frac{1}{2}}}\right\}.
\end{equation}
\item 
For any multi-index $\alpha=(\alpha_1,\cdots, \alpha_n)\in (\mathbb{Z}_{\geq 0})^n$, there is a constant $C_{\alpha}>0$ such that for all $c \in [2,\infty)$,
\begin{equation}\label{eq-7-1}\left|\nabla_\xi^\alpha \left(\frac{P_c(\xi)}{P_\infty(\xi)}\right)\right|\leq\frac{C_\alpha}{|\xi|^{|\alpha|}}.
\end{equation}
\end{enumerate}
\end{prop}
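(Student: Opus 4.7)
My plan is to reduce both estimates to uniform-in-$c$ Mikhlin-type symbol bounds on a handful of building blocks and then assemble via the Leibniz rule. Set $\langle\xi\rangle=\sqrt{1+|\xi|^2}$ and $\phi_c(\xi)=\sqrt{1+4|\xi|^2/c^2}=\langle 2\xi/c\rangle$. The conjugation identity $\sqrt{a}-\sqrt{b}=(a-b)/(\sqrt{a}+\sqrt{b})$ yields $P_c(\xi)=\frac{2|\xi|^2}{\phi_c(\xi)+1}+1$ together with the explicit formulas
\[
P_c-P_\infty=-\frac{4|\xi|^4/c^2}{(\phi_c+1)^2},\qquad
\frac{1}{P_\infty}-\frac{1}{P_c}=-\frac{4|\xi|^4/c^2}{(\phi_c+1)^2\,P_\infty P_c},
\]
which exhibit the $1/c^2$ smallness explicitly in the numerator.

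The first task is to collect uniform-in-$c\ge 2$ symbol bounds. Rescaling the classical estimate $|\nabla^\alpha\langle\eta\rangle^s|\le C_{\alpha,s}\langle\eta\rangle^{s-|\alpha|}$ via $\eta=2\xi/c$ gives $|\nabla^\alpha\phi_c(\xi)|\le C_\alpha c^{-|\alpha|}\phi_c(\xi)^{1-|\alpha|}$ for $|\alpha|\ge 1$; combined with the elementary inequality $c\,\phi_c(\xi)\ge\langle\xi\rangle$ (valid for $c\ge 1$), this becomes the Mikhlin-type form $|\nabla^\alpha\phi_c|\le C_\alpha\phi_c\langle\xi\rangle^{-|\alpha|}$. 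Fa\`a di Bruno then promotes this to $|\nabla^\alpha(\phi_c+1)^{-k}(\xi)|\le C_{\alpha,k}(\phi_c+1)^{-k}\langle\xi\rangle^{-|\alpha|}$. The same method yields $|\nabla^\alpha P_c(\xi)|\le C_\alpha\langle\xi\rangle^{2-|\alpha|}$ and the two-sided bound $c_0\langle\xi\rangle\min(\langle\xi\rangle,c)\le P_c(\xi)\le\langle\xi\rangle^2$, from which Fa\`a di Bruno delivers $|\nabla^\alpha P_c^{-1}|\le C_\alpha P_c^{-1}\langle\xi\rangle^{-|\alpha|}$. The bounds for $P_\infty$ and $P_\infty^{-1}$ are classical.

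Given these estimates, part (2) follows at once from Leibniz: $|\nabla^\alpha(P_c/P_\infty)|\le\sum_{\beta\le\alpha}C_{\alpha,\beta}|\nabla^\beta P_c|\,|\nabla^{\alpha-\beta}P_\infty^{-1}|\le C_\alpha\langle\xi\rangle^{-|\alpha|}\le C_\alpha|\xi|^{-|\alpha|}$, uniformly in $c$. For part (1) I expand the Leibniz formula for $-(4/c^2)|\xi|^4(\phi_c+1)^{-2}P_c^{-1}P_\infty^{-1}$ into four-factor products and split into two regimes. When $|\xi|\le c$, all $c$-dependent factors ($\phi_c$, $\langle\xi\rangle^2/P_c$) are uniformly $O(1)$, and power counting gives $\le C_\alpha/(c^2|\xi|^{|\alpha|})$; since $\langle\xi\rangle\lesssim c$ on this region, this equals the minimum in \eqref{eq-7-2}. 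When $|\xi|\ge c$, one uses $\phi_c+1\asymp 2|\xi|/c$ and $P_c\asymp c|\xi|$; careful tracking of the powers of $c$ and $|\xi|$ shows that the $|\xi|^4/c^2$ numerator, after being multiplied by $(\phi_c+1)^{-2}\sim c^2/|\xi|^2$ and $P_c^{-1}\sim 1/(c|\xi|)$, leaves the overall factor $1/(c\langle\xi\rangle)$ beyond $|\xi|^{-|\alpha|}$, matching the minimum on this region.

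The main technical obstacle is the uniform bound $|\nabla^\alpha P_c^{-1}|\le C_\alpha P_c^{-1}\langle\xi\rangle^{-|\alpha|}$ in Step 1, because $P_c^{-1}$ is \emph{not} a classical symbol of any single order uniformly in $c$: it behaves as order $-2$ for $|\xi|\le c$ but degenerates to order $-1$ for $|\xi|\gg c$. Its proof rests on carefully invoking the scaling inequality $c\,\phi_c(\xi)\ge\langle\xi\rangle$ at every order of differentiation via Fa\`a di Bruno. Once this is in place, the remainder of the argument is clean bookkeeping of the Leibniz expansion and the power counting in the two regimes.
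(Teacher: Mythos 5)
Your approach is genuinely different from the paper's. The paper factors $a=\frac{1}{P_\infty}-\frac{1}{P_c}=\frac{P_c-P_\infty}{P_cP_\infty}$, shows inductively that $\nabla^\alpha a$ is a sum of products of a fixed list of building blocks, and tracks a $C/|\xi|$ gain per differentiation; the zeroth step comes from the Taylor bound $|P_c-P_\infty|\le|\xi|^4/c^2$. You instead derive the closed form $P_c-P_\infty=-\frac{4|\xi|^4/c^2}{(\phi_c+1)^2}$ with $\phi_c(\xi)=\sqrt{1+4|\xi|^2/c^2}$, which makes the $c^{-2}$ smallness algebraically transparent, and reduce everything to uniform-in-$c$ Mikhlin-type bounds on $\phi_c$, $(\phi_c+1)^{-k}$, $P_c^{-1}$, $P_\infty^{-1}$ obtained by rescaling and Fa\`a di Bruno. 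Your algebra for part (2) and for the two-regime Leibniz bookkeeping in part (1) is correct, and the use of $c\,\phi_c(\xi)\ge\langle\xi\rangle$ to convert $c^{-|\alpha|}\phi_c^{1-|\alpha|}$ into $\phi_c\langle\xi\rangle^{-|\alpha|}$ is the right idea.

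However, there is a genuine gap exactly where you flag the main technical obstacle. You claim that Fa\`a di Bruno applied to $|\nabla^\alpha P_c(\xi)|\le C_\alpha\langle\xi\rangle^{2-|\alpha|}$ together with $c_0\langle\xi\rangle\min(\langle\xi\rangle,c)\le P_c(\xi)$ delivers $|\nabla^\alpha P_c^{-1}|\le C_\alpha P_c^{-1}\langle\xi\rangle^{-|\alpha|}$. It does not: Fa\`a di Bruno gives terms $P_c^{-1-m}\prod_j\nabla^{\beta_j}P_c$ with $\sum|\beta_j|=|\alpha|$, and your input bound yields
\[
P_c^{-1-m}\prod_j\bigl|\nabla^{\beta_j}P_c\bigr|\ \le\ C\,P_c^{-1}\langle\xi\rangle^{-|\alpha|}\Bigl(\frac{\langle\xi\rangle^2}{P_c}\Bigr)^{m},
\]
and by your own two-sided bound $\langle\xi\rangle^2/P_c\asymp\max\{1,\langle\xi\rangle/c\}$, which is unbounded for $|\xi|\gg c$. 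So the derivation of the key $P_c^{-1}$ estimate collapses precisely in the ultra-relativistic regime, which is the regime where the proposition must beat the half-wave symbol. The target bound $|\nabla^\alpha P_c^{-1}|\le CP_c^{-1}\langle\xi\rangle^{-|\alpha|}$ is in fact true, but to get it through Fa\`a di Bruno you need the $P_c$-weighted derivative bound $|\nabla^\alpha P_c|\le C_\alpha\,P_c\,|\xi|^{-|\alpha|}$ for $|\alpha|\ge1$ (note $P_c|\xi|^{-|\alpha|}\ll\langle\xi\rangle^{2-|\alpha|}$ when $|\xi|\gg c$, so this is strictly stronger). That bound does follow from your formula $P_c=1+\tfrac{2|\xi|^2}{\phi_c+1}$ by applying Leibniz to $\tfrac{2|\xi|^2}{\phi_c+1}$ and using $|\nabla^\gamma(\phi_c+1)^{-1}|\le C(\phi_c+1)^{-1}\langle\xi\rangle^{-|\gamma|}$, but you must establish it; the crude bound you stated is insufficient. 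Once this is repaired, the rest of your plan closes.
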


\begin{proof}[Proof of Theorem \ref{thm difference} assuming Proposition \ref{symbol fraction}]
For any multi-index $\alpha \in (\mathbb{Z}_{\geq 0})^n$, it follows from estimate \eqref{eq-7-2} that 
$$\left|\nabla_\xi^\alpha\left(\frac{1}{P_\infty(\xi)}-\frac{1}{P_c(\xi)}\right)\right|\leq\frac{C_\alpha}{c^2|\xi|^{|\alpha|}}$$ and
\begin{align*}
\left|\nabla_\xi^\alpha\left(\left(\frac{1}{P_\infty(\xi)}-\frac{1}{P_c(\xi)}\right)P_\infty(\xi)^{\frac{1}{2}}\right)\right|&\leq\sum_{\alpha_1+\alpha_2=\alpha} \left|\nabla_\xi^{\alpha_1}\left(\frac{1}{P_\infty(\xi)}-\frac{1}{P_c(\xi)}\right)\right|\left|\nabla_\xi^{\alpha_2}\left(P_\infty(\xi)^{\frac{1}{2}}\right)\right|\\
&\leq \sum_{\alpha_1+\alpha_2=\alpha}\frac{C_{\alpha_1}}{c|\xi|^{|\alpha_1|+1}}\frac{C_{\alpha_2}}{|\xi|^{|\alpha_2|-1}}= \sum_{\alpha_1+\alpha_2=\alpha}\frac{C_{\alpha_1}C_{\alpha_2}}{c|\xi|^{|\alpha|}}\\
&\leq\frac{C_\alpha}{c|\xi|^{|\alpha|}}.
\end{align*}
Thus, Theorem \ref{thm difference} follows from Theorem \ref{thm-2-2}.
\end{proof}

\begin{proof}[Proof of Theorem \ref{norm comparability} assuming Proposition \ref{symbol fraction}]
By the triangle inequality and \eqref{symbol estimate 2}, we prove that 
\begin{align*}
\left\|\frac{1}{P_c(D)}f\right\|_{L^q}&\leq \left\|\frac{1}{P_\infty(D)}f\right\|_{L^q}+\left\|\left(\frac{1}{P_\infty(D)}-\frac{1}{P_c(D)}\right)f\right\|_{L^q}\\
&\leq \left\|\frac{1}{P_\infty(D)}f\right\|_{L^q}+\frac{C_q}{c}\left\|\frac{1}{P_\infty(D)^{\frac{1}{2}}}f\right\|_{L^q}\\
&\leq C_{q}\left\|f\right\|_{W^{-1,q}}.
\end{align*}
By inserting $f \rightarrow {P_c (D)}\sqrt{P_{\infty}(D)} f$, we prove the first inequality. The second inequality follows from \eqref{eq-7-1} and Theorem \ref{thm-2-2}.
\end{proof}
In the rest of this section, we prove Proposition \ref{symbol fraction}. For this aim, we first observe that the pseudo-relativistic symbol $P_c(\xi)$ is comparable with the non-relativistic symbol $P_\infty(\xi)$ inside a large ball, while it is like the half-wave symbol $c|\xi|+1$ outside. 

\begin{lem}[Pointwise comparability on the pseudo-relativistic symbol]\label{P_c comparison}
$$
\left\{ \begin{aligned}
\tfrac{|\xi|^2+1}{2}&\leq P_c(\xi)\leq |\xi|^2+1&&\textup{ if }|\xi|\leq\tfrac{\sqrt{3}c}{2},\\
\tfrac{c|\xi|+1}{2}&\leq P_c(\xi)\leq c|\xi|+1&&\textup{ if }|\xi|\geq\tfrac{\sqrt{3}c}{2}.
\end{aligned}\right.
$$
\end{lem}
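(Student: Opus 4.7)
The plan is to reduce the lemma to a one-variable inequality via the substitution $t = 4|\xi|^2/c^2$, under which
$$
P_c(\xi) \;=\; \tfrac{c^2}{2}\bigl(\sqrt{1+t}-1\bigr)+1.
$$
The threshold $|\xi| = \sqrt{3}\,c/2$ corresponds precisely to $t = 3$, which cleanly separates the two asymptotic regimes.

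For the low-frequency regime ($t \leq 3$), I will use the identity $\sqrt{1+t}-1 = t/(\sqrt{1+t}+1)$. Since $\sqrt{1+t}+1 \in [2,3]$ on $t\in[0,3]$, this immediately gives $t/3 \leq \sqrt{1+t}-1 \leq t/2$; unwinding the substitution yields
$$
\tfrac{2|\xi|^2}{3} \;\leq\; P_c(\xi)-1 \;\leq\; |\xi|^2,
$$
which is stronger than (hence implies) the claimed sandwich $(|\xi|^2+1)/2 \leq P_c(\xi) \leq |\xi|^2+1$ after adding $1$ and using $\tfrac{2|\xi|^2}{3}+1 \geq \tfrac{|\xi|^2+1}{2}$.

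For the high-frequency regime ($t\geq 3$), the upper bound is immediate from the elementary inequality $\sqrt{1+t} \leq 1+\sqrt{t}$ (square both sides), which after substitution gives $P_c(\xi)-1 \leq c|\xi|$. The main obstacle is the lower bound, since the crude estimate $\sqrt{1+t}-1 \geq \sqrt{t}-1$ loses a constant factor in $\sqrt{t}$ and is not sharp enough. Instead, I propose to establish the sharp inequality
$$
\sqrt{1+t}-1 \;\geq\; \sqrt{t}/\sqrt{3} \qquad\text{for all }t\geq 3.
$$
Setting $g(t) := \sqrt{1+t}-1-\sqrt{t/3}$, a short calculation gives $g'(t) = \tfrac{1}{2\sqrt{1+t}} - \tfrac{1}{2\sqrt{3t}}$, so $g'(t) \geq 0$ iff $3t \geq 1+t$, i.e., for $t \geq 1/2$; combined with $g(3) = 2-1-1 = 0$ this forces $g \geq 0$ on $[3,\infty)$. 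Unwinding the substitution converts this into $P_c(\xi)-1 \geq c|\xi|/\sqrt{3}$, and since $1/\sqrt{3} > 1/2$ the desired lower bound $(c|\xi|+1)/2 \leq P_c(\xi)$ follows from the trivial estimate $c|\xi|(1/\sqrt{3}-1/2) \geq 0 \geq -1/2$.
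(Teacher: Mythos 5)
Your proof is correct and follows essentially the same strategy as the paper: reduce to a one-variable inequality via $t=4|\xi|^2/c^2$, split at $t=3$, and compare $P_c(\xi)-1$ to $|\xi|^2$ in the low-frequency regime and to $c|\xi|/\sqrt3$ and $c|\xi|$ in the high-frequency regime. The only cosmetic differences are that the paper appeals to Taylor's theorem for the low-frequency case (where you use the rationalization $\sqrt{1+t}-1=t/(\sqrt{1+t}+1)$) and rationalizes $P_c(\xi)-1$ directly in the $\xi$-variable for the high-frequency case (where you prove the one-variable inequality $\sqrt{1+t}-1\geq\sqrt{t/3}$ by a monotonicity argument); the estimates obtained and the way they close the lemma are the same.
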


\begin{proof}
Suppose that $|\xi|\leq\frac{\sqrt{3}c}{2}$. Then, $P_c(\xi)=\frac{c^2}{2}f(\frac{4|\xi|^2}{c^2})+1$, where $f(t)=\sqrt{1+t}-1$. Since $f(0)=0$ and $\frac{1}{4}\leq f'(t)=\frac{1}{2\sqrt{1+t}}\leq\frac{1}{2}$ on $[0,3]$, by Taylor's theorem,  we have
$$\frac{|\xi|^2}{2}=\frac{c^2}{2}\cdot\frac{1}{4}\cdot\frac{4|\xi|^2}{c^2}\leq P_c(\xi)-1\leq \frac{c^2}{2}\cdot\frac{1}{4}\cdot\frac{4|\xi|^2}{c^2}=|\xi|^2.$$
On the other hand, if $|\xi|\geq\frac{\sqrt{3}c}{2}$, then
$$P_c(\xi)-1=c|\xi|\left(\sqrt{1+\frac{c^2}{4|\xi|^2}}-\frac{c}{2|\xi|}\right)=\frac{c|\xi|}{\sqrt{1+\frac{c^2}{4|\xi|^2}}+\frac{c}{2|\xi|}}$$
obeys the desired upper and lower bound, because $\frac{c|\xi|}{\sqrt{3}}\leq P_c(\xi)-1\leq c|\xi|$.
\end{proof}

\begin{rem}\label{P_c comparison remark}
Indeed, the inequality $P_c(\xi)\leq |\xi|^2+1$ holds for all $\xi$, since in the proof of Lemma \ref{P_c comparison}, we can use $|f'(t)|\leq\frac{1}{2}$ for all $t\geq 0$.
\end{rem}
Next, we show that the the pseudo-relativistic symbol $P_c(\xi)$ approximates the non-relativistic symbol $P_\infty(\xi)$ near the origin.
 
\begin{lem}[Pointwise estimate on the difference between the two symbols]\label{P_c comparison'}
$$|P_c(\xi)-P_\infty(\xi)|\leq\frac{|\xi|^4}{c^2}.$$
\end{lem}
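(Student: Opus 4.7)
The plan is to prove the pointwise bound by rationalizing the square root twice. Observe first that the constant $+1$ appearing in both symbols cancels, so
\[
P_c(\xi)-P_\infty(\xi)=\sqrt{c^2|\xi|^2+\tfrac{c^4}{4}}-\tfrac{c^2}{2}-|\xi|^2,
\]
and the whole task reduces to estimating this quantity.

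Next, I would rationalize $\sqrt{c^2|\xi|^2+c^4/4}-c^2/2$ in the usual way:
\[
\sqrt{c^2|\xi|^2+\tfrac{c^4}{4}}-\tfrac{c^2}{2}=\frac{c^2|\xi|^2}{\sqrt{c^2|\xi|^2+c^4/4}+c^2/2},
\]
so that
\[
P_c(\xi)-P_\infty(\xi)=|\xi|^2\!\left(\frac{c^2}{\sqrt{c^2|\xi|^2+c^4/4}+c^2/2}-1\right)=|\xi|^2\cdot\frac{c^2/2-\sqrt{c^2|\xi|^2+c^4/4}}{\sqrt{c^2|\xi|^2+c^4/4}+c^2/2}.
\]
Taking absolute values and rationalizing once more in the numerator produces
\[
|P_c(\xi)-P_\infty(\xi)|=\frac{c^2|\xi|^4}{\left(\sqrt{c^2|\xi|^2+c^4/4}+c^2/2\right)^2}.
\]

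To finish, I would use the trivial lower bound $\sqrt{c^2|\xi|^2+c^4/4}\ge c^2/2$, which gives $\sqrt{c^2|\xi|^2+c^4/4}+c^2/2\ge c^2$; squaring and inserting this into the last display yields $|P_c(\xi)-P_\infty(\xi)|\le |\xi|^4/c^2$ as claimed. There is no real obstacle here: the only thing to watch for is that one really gets a factor $c^4$ in the denominator (not $c^2$), which is exactly what the ``rationalize twice'' computation accomplishes. A naive attempt via Taylor expansion of $\sqrt{1+t}-1-t/2$ in $t=4|\xi|^2/c^2$ would work in the regime $|\xi|\ll c$ but would not yield a uniform bound for $|\xi|\gg c$, whereas the rationalization handles both regimes at once.
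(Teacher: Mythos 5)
Your rationalization argument is correct and gives the bound cleanly; it is a genuinely different route from the paper, which writes $P_c(\xi)-1=\tfrac{c^2}{2}f(\tfrac{4|\xi|^2}{c^2})$ with $f(t)=\sqrt{1+t}-1$ and applies Taylor's theorem with the Lagrange remainder, using $f(0)=0$, $f'(0)=\tfrac12$ (so the $|\xi|^2$ term cancels exactly) and $|f''(t_*)|=\tfrac14(1+t_*)^{-3/2}\le\tfrac14$. Your closed-form expression
\[
|P_c(\xi)-P_\infty(\xi)|=\frac{c^2|\xi|^4}{\bigl(\sqrt{c^2|\xi|^2+c^4/4}+c^2/2\bigr)^2}
\]
is arguably more transparent and also records the exact prefactor. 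However, your closing remark is off: you claim a Taylor-expansion argument ``would not yield a uniform bound for $|\xi|\gg c$,'' but this confuses an asymptotic power-series truncation (only good for small $t$) with Taylor's theorem in Lagrange form, which is an identity valid for every $t\ge 0$. Since $|f''|\le\tfrac14$ on all of $[0,\infty)$, the paper's remainder estimate $\bigl|\tfrac{c^2}{2}\cdot\tfrac12 f''(t_*)(\tfrac{4|\xi|^2}{c^2})^2\bigr|\le\tfrac{|\xi|^4}{c^2}$ holds uniformly in $\xi$ and $c$, so that approach is in fact just as uniform as yours. Both proofs are valid; neither has an advantage in generality here.
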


\begin{proof}
Let $f(t)$ be the function given in the proof of Lemma \ref{P_c comparison'}. Then, by Taylor's theorem together with $f''(t)=-\frac{1}{4}(1+t)^{-\frac{3}{2}}$, we have 
$$|P_c(\xi)-P_\infty(\xi)|=\left|\frac{c^2}{2}\left(f(0)+f'(0)\tfrac{4|\xi|^2}{c^2}+\frac{1}{2}f''(t_*)(\tfrac{4|\xi|^2}{c^2})^2\right)-|\xi|^2\right|\leq\frac{|\xi|^4}{c^2}$$
for some $t_*\in (0,\tfrac{4|\xi|^2}{c^2})$.
\end{proof}
Now we are ready to prove Proposition \ref{symbol fraction}.
\begin{proof}[Proof of Proposition \ref{symbol fraction}]
We \eqref{eq-7-2} and \eqref{eq-7-1} separately.

\medskip 

\noindent \textbf{Proof of \eqref{eq-7-2}.} We denote 
$$a(\xi):=\frac{1}{P_\infty(\xi)}-\frac{1}{P_c(\xi)}=\frac{P_c(\xi)-P_\infty(\xi)}{P_c(\xi) P_\infty(\xi)}.$$
First, we find a structure of the derivatives of the symbol $a$. Precisely, we claim that $\nabla_\xi^\alpha a(\xi)$ is the sum of products of the following factors,
\begin{equation}\label{factors}
a(\xi),\ \frac{1}{P_c(\xi)^{\ell_1}},\ \frac{1}{P_\infty(\xi)^{\ell_2}},\ \frac{1}{(c^2|\xi|^2+\frac{c^4}{4})^{\frac{\ell_3}{2}}},\  \textup{a polynomial of } c, \xi_1,\cdots, \xi_n,
\end{equation}
where $\ell_1,\ell_2,\ell_3\in\mathbb{Z}_{\geq 0}$. The claim is obviously true when $\alpha=0$. Suppose that the claim holds for some multi-index $\alpha$, and consider its derivative. By the induction hypothesis, $(\nabla_{\xi_j}\nabla_{\xi}^\alpha a)(\xi)$ is the sum of the derivatives of the products. By the Leibniz rule, the derivative of each product term is the sum of the derivative of one factor in \eqref{factors} times the product of others in \eqref{factors}. Thus, it suffices to show that the derivative of any of \eqref{factors} is again a product of terms in \eqref{factors}. Of course,  the derivative of a polynomial of $c, \xi_1,\cdots, \xi_n$ is also a polynomial of the same variables. By direct calculations, we observe that 
\begin{equation}\label{derivative of factors}
\begin{aligned}
\nabla_{\xi_j}\left(P_c(\xi)-P_\infty(\xi)\right)&=-\frac{2\xi_jP_c(\xi)}{(c^2|\xi|^2+\frac{c^4}{4})^{\frac{1}{2}}},\\
\nabla_{\xi_j}\left(\frac{1}{P_c(\xi)^{\ell_1}}\right)&=-\frac{\ell_1c^2\xi_j}{(c^2|\xi|^2+\frac{c^4}{4})^{\frac{1}{2}}P_c(\xi)}\left(\frac{1}{P_c(\xi)^{\ell_1}}\right),\\
\nabla_{\xi_j}\left(\frac{1}{P_\infty(\xi)^{\ell_2}}\right)&=-\frac{2\ell_2\xi_j}{P_\infty(\xi)}\left(\frac{1}{P_\infty(\xi)^{\ell_2}}\right),\\
\nabla_{\xi_j}\left(\frac{1}{(c^2|\xi|^2+\frac{c^4}{4})^{\frac{\ell_3}{2}}}\right)&=-\frac{\ell_3\xi_j}{|\xi|^2+\frac{c^2}{4}}\left(\frac{1}{(c^2|\xi|^2+\frac{c^4}{4})^{\frac{\ell_3}{2}}}\right).
\end{aligned}
\end{equation}
Hence, by the Leibniz rule, 
\begin{equation}\label{derivative of a}
\begin{aligned}
(\nabla_{\xi_j}a)(\xi)&=\nabla_{\xi_j}\left(\frac{P_c(\xi)-P_\infty(\xi)}{P_c(\xi) P_\infty(\xi)}\right)\\
&=-\frac{2\xi_j}{P_\infty(\xi)(c^2|\xi|^2+\frac{c^4}{4})^{\frac{1}{2}}}-\frac{\ell_1c^2\xi_j}{(c^2|\xi|^2+\frac{c^4}{4})^{\frac{1}{2}}P_c(\xi)}a(\xi)-\frac{2\ell_2\xi_j}{P_\infty(\xi)}a(\xi).
\end{aligned}
\end{equation}
From this, we conclude that when a derivative hits a factor in \eqref{factors}, it does not make a new type of factors other than \eqref{factors}. Thus, the claim is proved.

We also note from \eqref{derivative of factors} that when $\frac{1}{P_c(\xi)^{\ell_1}}$, $\frac{1}{P_\infty(\xi)^{\ell_2}}$, $\frac{1}{(c^2|\xi|^2+\frac{c^4}{4})^{\frac{\ell_3}{2}}}$ are differentiated, extra factors are produced. Moreover, these extra factors are all bounded by $\frac{C}{|\xi|}$. Indeed, by Lemma \ref{P_c comparison}, 
\begin{equation}\label{extra term estimate}
\begin{aligned}
\left|\frac{\ell_1c^2\xi_j}{(c^2|\xi|^2+\frac{c^4}{4})^{\frac{1}{2}}P_c(\xi)}\right|&\leq\left\{ \begin{aligned}
&\frac{\ell_1c^2|\xi|}{\frac{c^2}{2}\frac{|\xi|^2}{2}}=\frac{4\ell_1}{|\xi|}&&\textup{ if }|\xi|\leq\tfrac{\sqrt{3}c}{2},\\
&\frac{\ell_1c^2|\xi|}{c|\xi|\frac{c|\xi|}{2}}=\frac{2\ell_1}{|\xi|}&&\textup{ if }|\xi|\geq\tfrac{\sqrt{3}c}{2},
\end{aligned}\right.\\
\left|\frac{2\ell_2\xi_j}{P_\infty(\xi)}\right|&\leq\frac{2\ell_2|\xi|}{|\xi|^2+1}\leq \frac{2\ell_2}{|\xi|},\\
\left|\frac{\ell_3\xi_j}{|\xi|^2+\frac{c^2}{4}}\right|&\leq\frac{\ell_3|\xi|}{|\xi|^2+1}\leq  \frac{\ell_3}{|\xi|}.
\end{aligned}
\end{equation}

We now prove the proposition by induction. For the zeroth induction step, i.e., $\alpha=0$, using Lemma \ref{P_c comparison}, we show that if $|\xi|\leq\frac{\sqrt{3}c}{2}$, then
$$|a(\xi)|\leq\frac{\frac{|\xi|^4}{c^2}}{\frac{|\xi|^2+1}{2}(|\xi|^2+1)}\leq\min\left\{\frac{2}{c^2},\frac{2|\xi|}{c^2(|\xi|^2+1)^{\frac{1}{2}}}\right\}\leq 2\min\left\{\frac{1}{c^2},\frac{1}{c(|\xi|^2+1)^{\frac{1}{2}}}\right\}.$$
On the other hand, if $|\xi|\geq\frac{\sqrt{3}c}{2}$, then by Lemma \ref{P_c comparison} again, 
\begin{align*}
|a(\xi)|&\leq\frac{1}{P_\infty(\xi)}+\frac{1}{P_c(\xi)}\leq\frac{1}{|\xi|^2+1}+\frac{2}{c|\xi|+1}\\
&\leq\min\left\{\frac{4}{3c^2}+\frac{4}{\sqrt{3}c^2},\frac{2}{\sqrt{3}c(|\xi|^2+1)^{\frac{1}{2}}}+\frac{2\sqrt{2}}{c(|\xi|^2+1)^{\frac{1}{2}}}\right\}\\
&\leq 4\min\left\{\frac{1}{c^2},\frac{1}{c(|\xi|^2+1)^{\frac{1}{2}}}\right\}.
\end{align*}
For the first induction step, i.e., $|\alpha|=1$, we consider the sum \eqref{derivative of a}. By a trivial inequality, the first term in \eqref{derivative of a} is bounded by   
$$\frac{4}{|\xi|} \min\left\{\frac{1}{c^2},\frac{1}{c(|\xi|^2+1)^{\frac{1}{2}}}\right\}.$$
Moreover, it follows from \eqref{extra term estimate} and the zeroth induction step that the second and the last terms in $\eqref{derivative of a}$ also obeys the same bound. Collecting all, we complete the proof of the first induction step.

For induction, we assume that each product in the sum for $(\nabla_\xi^\alpha a)(\xi)$ is bounded by
$$\frac{C_\alpha}{|\xi|^{|\alpha|}}\min\left\{\frac{1}{c^2}, \frac{1}{c(|\xi|^2+1)^{\frac{1}{2}}}\right\}.$$
Then, it suffices to show that each term in the sum for $(\nabla_{\xi_j}\nabla_\xi^\alpha a)(\xi)$ satisfies the desired bound. Indeed, all these terms are obtained by differentiating the product terms in the previous step. However, as mentioned previously, when a product is differentiated, the derivative lands on either a polynomial factor or other types of factors in \eqref{factors}. When a polynomial is differentiated, its degree is reduced by one. Otherwise, an extra factor is generated (see \eqref{derivative of factors}) and such an extra factor is bounded by $\frac{C}{|\xi|}$ (see \eqref{extra term estimate}). Thus, summing up all bounds, we prove the proposition.

\medskip

\noindent \textbf{Proof of \eqref{eq-7-1}. }The proof is very similar to that of estimate \eqref{eq-7-2}, so we only give a sketch of it. First, by Remark \ref{P_c comparison remark}, $|\frac{P_c(\xi)}{P_\infty(\xi)}|\leq 1$. Next, we prove the first derivative
$$\nabla_{\xi_j}\left(\frac{P_c(\xi)}{P_\infty(\xi)}\right)=\frac{1}{P_\infty(\xi)}\cdot\frac{c^2\xi_j}{(c^2|\xi|^2+\frac{c^4}{4})^{\frac{1}{2}}}-\frac{2\xi_j}{P_\infty(\xi)}\frac{P_c(\xi)}{P_\infty(\xi)}$$
is bounded by
$$\frac{2|\xi|}{P_\infty(\xi)}+\frac{2|\xi|}{P_\infty(\xi)}\cdot 1\leq \frac{4}{|\xi|}.$$

By the induction argument in the proof of estimate \eqref{eq-7-2}, one can show that $\nabla_{\xi}^\alpha(\frac{P_c(\xi)}{P_\infty(\xi)})$ is the sum of products of $\frac{P_c(\xi)}{P_\infty(\xi)}$, $\frac{1}{P_\infty(\xi)^{\ell_1}}$, $\frac{1}{(c^2|\xi|^2+\frac{c^4}{4})^{\frac{\ell_2}{2}}}$ and a polynomial of $c, \xi_1,\cdots, \xi_n$. For induction, we assume that each product in the sum for the expansion of $\nabla_{\xi}^\alpha(\frac{P_c(\xi)}{P_\infty(\xi)})$ is bounded by $\frac{C_\alpha}{|\xi|^{|\alpha|}}$. If we differentiate each product, then differentiation produces an extra factor keeping the same structure. Moreover, all possible extra factors are bounded by $\frac{C}{|\xi|}$ (see \eqref{derivative of factors} and \eqref{extra term estimate}). Therefore, the derivative of each product is bounded by $\frac{C_\alpha'}{|\xi|^{|\alpha|+1}}$. Then, summing all the bounds, we obtain the desired bound for the derivative of $\nabla_{\xi}^\alpha(\frac{P_c(\xi)}{P_\infty(\xi)})$.
\end{proof}

\section{Existence result}\label{sec: existence result}

This section is devoted to our main existence theorem whose proof will be divided into several steps. First, in \S\ref{algebra}, by algebraic manipulation, we simplify to the case $m=\frac{1}{2}$ and $\mu=1$. In \S\ref{reformulation}, we reformulate  the pseudo-relativistic Schr\"odinger equation \eqref{eq-main}  as an equation for the perturbation from the non-relativistic ground state (see \eqref{eq w''}). The goal is then to construct a solution to the equation by a standard contraction mapping argument. To that end, we prove several key estimates for contraction in \S\ref{sec: invertibility}-\S\ref{sec: nonlinear estimates}. After being prepared, in \S\ref{sec: construction of w}, we establish existence and uniqueness of a solution to the reformulated equation. Finally, in \S\ref{sec: properties}, we complete the proof of Theorem \ref{existence}.

\subsection{Reduction to the simple case}\label{algebra}

To begin with, we observe that if $v_c$ is a non-trivial solution to the pseudo-relativistic NLS with $m=\frac{1}{2}$ and $\mu=1$, i.e., 
\begin{equation}\label{reduced eq-main}
P_c(D) u=|u|^{p-1}u,
\end{equation}
where
$$P_c(D):=\left(\sqrt{-c^2\Delta+\frac{c^4}{4}}-\frac{c^2}{2}\right)+1,$$
then $u_c(x)=\mu^{\frac{1}{p-1}}v_c(\sqrt{2m\mu} x)$ solves
$$\left(\sqrt{-\tilde{c}^2\Delta+m^2\tilde{c}^4}-m\tilde{c}^2\right)u+\mu u=|u|^{p-1}u,$$
where $\tilde{c}=c\sqrt{\frac{\mu}{2m}}$. Thus, we may restrict ourselves to the case $m=\frac{1}{2}$ and $\mu=1$.

\subsection{Setup for contraction}\label{reformulation}

We aim to find a nontrivial solution to \eqref{reduced eq-main} by employing a perturbation argument. Throughout this section, we assume that $1<p<\infty$ if $n=1,2$, and that $1<p<\frac{n+2}{n-2}$ if $n\geq 3$.  

Let $u_{\infty} \in H_r^1$ be a ground state to the non-relativistic Schr\"odinger equation
$$P_\infty(D) u=|u|^{p-1}u,$$
which is known to be positive and unique. Hoping to find a radially symmetric real-valued solution $u_c$ to the pseudo-relativistic equation \eqref{reduced eq-main} close to the non-relativistic ground state $u_\infty$, we write the equation for the difference
$$w:=u_c-u_\infty:\mathbb{R}^n\to\mathbb{R},$$
that is,
\begin{align*}
P_c(D)w&=P_c(D)u_c-P_c(D)u_\infty\\
&=(P_\infty(D)-P_c(D))u_\infty+P_c(D)u_c-P_\infty(D)u_\infty\\\
&=(P_\infty(D)-P_c(D))u_\infty+\Big\{|u_\infty+w|^{p-1}(u_\infty+w)-u_\infty^p\Big\}.
\end{align*}
Then, subtracting the linear component $p u_\infty^{p-1}w$ from the both sides, we get
$$\mathcal{L}_{c;\infty}w=(P_\infty(D)-P_c(D))u_\infty+\mathcal{Q}(w),$$
where
$$\mathcal{L}_{c;\infty}:=P_c(D)-pu_\infty^{p-1}$$
and
\begin{equation}\label{nonlinearity}
\mathcal{Q}(w):=|u_\infty+w|^{p-1}(u_\infty+w)-u_\infty^p-pu_\infty^{p-1}w.
\end{equation}
Finally, using that the operator $\mathcal{L}_{c;\infty}$ is invertible (see Proposition \ref{invertibility} below), which is the key ingredient in our analysis, we derive the equation
\begin{equation}\label{eq w''}
w=\mathcal{R}_c+(\mathcal{L}_{c;\infty})^{-1}\mathcal{Q}(w),
\end{equation}
where
$$\mathcal{R}_c:=(\mathcal{L}_{c;\infty})^{-1}(P_\infty(D)-P_c(D))u_\infty.$$

We now wish to construct a radially symmetric real-valued solution $w$ for the equation \eqref{eq w''} via the standard contraction mapping argument, assuming that $c\geq 1$ is large enough. Precisely, we aim to show that the nonlinear map
\begin{equation}\label{contraction mapping}
\Phi_c(w):=\mathcal{R}_c+(\mathcal{L}_{c;\infty})^{-1}\mathcal{Q}(w)
\end{equation}
is contractive on a small ball in the Sobolev space $H_r^1\cap W^{1,q}$ of radially symmetric functions so that there is a unique solution $u_c$ to \eqref{reduced eq-main} in a small neighborhood of $u_\infty$. It should be noted that the reformulated equation \eqref{eq w''} is well-suited for our purpose. Indeed, the first term $\mathcal{R}_c$ is small for large $c$, because the ground state $u_\infty$ is a regular function and the symbol $|\xi|^2+1-P_c(\xi)$ is asymptotically $O(\frac{|\xi|^4}{c^2})$ as $c\to\infty$. Moreover, if $w$ is small, then the super-linear nonlinear term $(\mathcal{L}_{c;\infty})^{-1}\mathcal{Q}(w)$ is even smaller. Therefore, it is natural to expect that $\Phi_c$ maps a small ball to itself, and it is contractive on the set. These will be justified rigorously in the next subsections.

\subsection{Invertibility of $\mathcal{L}_{c;\infty}$}\label{sec: invertibility}
The following proposition asserts that the differential operator $\mathcal{L}_{c;\infty}$ is invertible, and moreover its inverse gains one derivative.

\begin{prop}[Invertibility and smoothing property of $\mathcal{L}_{c;\infty}$]\label{invertibility}
Let $2\leq q<\infty$. Then, there exists $c_0>0$ such that if $c \geq c_0$, then $\mathcal{L}_{c;\infty}: H_r^1\cap W^{1,q}\to L_r^2\cap L^q$ is invertible. Moreover, its inverse is uniformly bounded, i.e.,
$$\sup_{c\geq c_0}\|(\mathcal{L}_{c;\infty})^{-1}\|_{\mathcal{L}(L_r^2\cap L^q; H_r^1\cap W^{1,q})}<\infty,$$
where $\|\cdot\|_{\mathcal{L}(X;Y)}$ is the operator norm from the Banach space $X$ to the Banach space $Y$.
\end{prop}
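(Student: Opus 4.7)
The plan is to convert the problem of inverting $\mathcal{L}_{c;\infty}$ into a Fredholm problem on $L^q$ and then exploit the non-degeneracy of the non-relativistic linearized operator $\mathcal{L}_\infty := P_\infty(D) - pu_\infty^{p-1}$ on the radial sector, together with the symbol-calculus comparison results already established in Section \ref{sec: symbol calculus}.

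First, I would reformulate the equation $\mathcal{L}_{c;\infty} f = g$, for $g \in L_r^2 \cap L^q$, as a fixed-point equation by applying $P_c(D)^{-1}$ to both sides of $P_c(D) f = g + p u_\infty^{p-1} f$, obtaining
\begin{equation*}
(I - T_c) f = P_c(D)^{-1} g, \qquad T_c h := p\,P_c(D)^{-1}\bigl(u_\infty^{p-1} h\bigr).
\end{equation*}
This is legitimate because Theorem \ref{norm comparability} gives $\|P_c(D)^{-1}\|_{\mathcal{L}(L^q;W^{1,q})} \leq C_q$ uniformly in $c \geq 2$. Since $u_\infty$ decays exponentially (being the unique positive non-relativistic ground state), multiplication by $u_\infty^{p-1}$ followed by the gain-of-derivative from $P_c(D)^{-1}$ yields, via a Rellich--Kondrachov argument on radially symmetric functions with decay at infinity, a compact operator $T_c$ on $L_r^q$ (and likewise on $L_r^2$).

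Next, operator-norm convergence $T_c \to T_\infty := p\,P_\infty(D)^{-1}(u_\infty^{p-1}\,\cdot\,)$ on $L_r^q$ follows immediately from Theorem \ref{thm difference}(1):
\begin{equation*}
\|(T_c - T_\infty) f\|_{L^q} \leq p\,\bigl\|(P_c(D)^{-1} - P_\infty(D)^{-1})(u_\infty^{p-1} f)\bigr\|_{L^q} \leq \frac{C_q}{c^2}\,\|u_\infty\|_{L^\infty}^{p-1}\,\|f\|_{L^q},
\end{equation*}
and the analogous bound at $q=2$. It then suffices to show that $I - T_\infty$ is injective on $L_r^q$, for then the Fredholm alternative (applicable since $T_\infty$ is compact) yields invertibility of $I - T_\infty$, and a standard Neumann-series perturbation argument produces $(I - T_c)^{-1}$ with uniform bound for all $c \geq c_0$. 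For injectivity, if $f \in L_r^q$ satisfies $f = T_\infty f$, then $\mathcal{L}_\infty f = 0$ in the distributional sense; an elliptic bootstrap using the smoothing of $P_\infty(D)^{-1}$ against the exponentially decaying coefficient $u_\infty^{p-1}$ places $f$ in $H_r^1$ with spatial decay, and then non-degeneracy of $\mathcal{L}_\infty$ on the radial sector (its $H^1$-kernel is spanned by the non-radial translational modes $\partial_{x_j} u_\infty$) forces $f = 0$.

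Combining these, the inverse $\mathcal{L}_{c;\infty}^{-1} g := (I - T_c)^{-1} P_c(D)^{-1} g$ is well defined on $L_r^2 \cap L^q$, and by Theorem \ref{norm comparability} it maps into $H_r^1 \cap W^{1,q}$ with norm bounded uniformly in $c \geq c_0$. The main obstacle I expect is the injectivity step: promoting an $L^q$ element of $\ker(I - T_\infty)$ to an $H^1$ element of $\ker \mathcal{L}_\infty$ so that the standard non-degeneracy result applies. This bootstrap is routine but must be performed uniformly in the full range $2 \leq q < \infty$, and must be compatible between the $L_r^2$ and the $L_r^q$ inverses so that they agree on the intersection and together deliver the claimed mapping property into $H_r^1 \cap W^{1,q}$.
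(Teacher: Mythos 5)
Your proposal is correct and essentially reproduces the paper's argument: both factor out $P_c(D)$ (you on the left as $\mathcal{L}_{c;\infty}=P_c(D)(I-T_c)$, the paper on the right as $(\mathrm{Id}-p\,u_\infty^{p-1}P_c(D)^{-1})P_c(D)$), both invoke compactness of the limiting operator together with the non-degeneracy \eqref{non-degeneracy} to get injectivity via the Fredholm alternative, and both use the symbol estimates of Theorem \ref{thm difference} plus a Neumann-series perturbation to pass from $c=\infty$ to large finite $c$ with a uniform bound. The $L^q$-to-$H_r^2$ bootstrap you flag as the main concern in your injectivity step is indeed routine, following from the exponential decay of $u_\infty^{p-1}$, H\"older's inequality, and the smoothing of $P_\infty(D)^{-1}$.
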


The proof of the proposition heavily relies on the non-degeneracy of the linearized operator
$$\mathcal{L}_\infty:=-\Delta+1-pu_\infty^{p-1}=P_\infty(D)-pu_\infty^{p-1}: H^2\to L^2$$
about the non-relativistic ground state $u_\infty$ for radially symmetric functions, that is,
\begin{equation}\label{non-degeneracy}
\textup{Ker}(\mathcal{L}_\infty)\cap H_r^2=\{0\}.
\end{equation}
In the first step, by the non-degeneracy, we show invertibility of the operator
$$\mathcal{A}:=\textup{Id}-pu_\infty^{p-1}P_\infty(D)^{-1}.$$

\begin{lem}\label{A invertible}
For each $2\leq q<\infty$, the operator $\mathcal{A}:L_r^2\cap L^q \to L_r^2\cap L^q$ is invertible.
\end{lem}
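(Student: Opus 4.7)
The plan is to write $\mathcal{A} = \textup{Id} - K$ with $K := p u_\infty^{p-1} P_\infty(D)^{-1}$, and to recognize $K$ as a compact operator on $L_r^2 \cap L^q$. The Fredholm alternative (Riesz--Schauder) then reduces invertibility to injectivity, and injectivity will follow directly from the non-degeneracy \eqref{non-degeneracy} of $\mathcal{L}_\infty$.

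For compactness of $K$, I would first note that the Fourier multiplier $P_\infty(D)^{-1}$ (radial symbol) and multiplication by the radial function $u_\infty^{p-1}$ both preserve radial symmetry, so $K$ maps $L_r^2 \cap L^q$ into itself; boundedness on the intersection is immediate since $P_\infty(D)^{-1}\colon L^r \to W^{2,r}$ is continuous for $r \in \{2,q\}$ and $u_\infty \in L^\infty$. To upgrade to compactness, let $\{f_j\}$ be bounded in $L_r^2 \cap L^q$ and set $g_j := P_\infty(D)^{-1} f_j$, which is bounded in $H^2 \cap W^{2,q}$. Since $u_\infty$ is the ground state of \eqref{eq-3-9 limit}, it decays at infinity (in fact exponentially), so for any $\varepsilon > 0$ I can choose $R$ large so that $\|u_\infty^{p-1}\|_{L^\infty(|x|>R)} < \varepsilon$; this yields a uniform tail bound for $K f_j$ in both $L^2$ and $L^q$. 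Inside $B_R$, Rellich--Kondrachov produces a subsequence of $\{g_j\}$ convergent in $L^2(B_R) \cap L^q(B_R)$, and multiplying by the bounded factor $p u_\infty^{p-1}$ gives a Cauchy subsequence of $\{K f_j\}$ on $B_R$. A standard $\varepsilon/3$ combination of the interior convergence and the tail estimate then produces a subsequence of $\{Kf_j\}$ convergent in $L_r^2 \cap L^q$.

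For injectivity, suppose $\mathcal{A} f = 0$ for some $f \in L_r^2 \cap L^q$, i.e., $f = p u_\infty^{p-1} P_\infty(D)^{-1} f$. Setting $g := P_\infty(D)^{-1} f$, which lies in $H_r^2$ since $f \in L_r^2$, this relation rewrites as $P_\infty(D) g = p u_\infty^{p-1} g$, equivalently $\mathcal{L}_\infty g = 0$. The non-degeneracy \eqref{non-degeneracy} then forces $g = 0$, and hence $f = P_\infty(D) g = 0$.

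With compactness of $K$ and injectivity of $\mathcal{A} = \textup{Id} - K$ in hand, the Riesz--Schauder theorem on the Banach space $L_r^2 \cap L^q$ (equipped with $\|\cdot\|_{L^2} + \|\cdot\|_{L^q}$) immediately yields that $\mathcal{A}$ is bijective with bounded inverse. The main point to be careful about is handling the intersection space rather than a single $L^r$ space, but since the smoothing estimate for $P_\infty(D)^{-1}$, the local Rellich--Kondrachov embedding, and the tail estimate all apply equally well to both the $L^2$ and $L^q$ components, this is bookkeeping rather than a genuine obstacle.
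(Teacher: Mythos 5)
Your argument is the same as the paper's: write $\mathcal{A}=\mathrm{Id}-K$ with $K=pu_\infty^{p-1}P_\infty(D)^{-1}$, show $K$ is compact on $L_r^2\cap L^q$ using the decay of $u_\infty$ and Rellich--Kondrachov, deduce injectivity from the non-degeneracy \eqref{non-degeneracy}, and conclude by the Fredholm alternative. You merely spell out the $\varepsilon/3$ tail-plus-interior argument for compactness that the paper leaves implicit.
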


\begin{proof}
We claim that $pu_\infty^{p-1}P_\infty(D)^{-1}$ is a compact operator on $L_r^2\cap L^q$. Indeed, compactness follows from the well-known localization property of the ground state $u_\infty$ and the compact embedding $H^2(\Omega)\hookrightarrow L^2(\Omega)$ for any bounded set $\Omega$.

If $v\in \textup{Ker}\mathcal{A}$, then $P_\infty(D)^{-1}v\in \textup{Ker}(\mathcal{L}_\infty)$. Hence, it follows from the non-degeneracy \eqref{non-degeneracy} that $P_\infty(D)^{-1}v=0$ and thus $v=0$. Therefore, by the Fredholm alternative, we conclude that $\mathcal{A}$ is invertible.
\end{proof}

By the invertibility of $\mathcal{A}$, we may write
\begin{equation}\label{L_c,infty expansion}
\begin{aligned}
\mathcal{L}_{c;\infty}&=\Big\{\textup{Id}-pu_\infty^{p-1}P_c(D)^{-1}\Big\}P_c(D)\\
&=\Big\{\mathcal{A}+pu_\infty^{p-1}\big(P_\infty(D)^{-1}-P_c(D)^{-1}\big)\Big\}P_c(D)\\
&=\Big\{\textup{Id}+pu_\infty^{p-1}\big(P_\infty(D)^{-1}-P_c(D)^{-1}\big)\mathcal{A}^{-1}\Big\}\mathcal{A}P_c(D).
\end{aligned}
\end{equation}
Thus, the following lemma implies invertibility of $\mathcal{L}_{c;\infty}$.

\begin{lem}\label{invertibility lemma}
Let $2\leq q<\infty$. Suppose that $1<p<\infty$ if $n=1,2$, and that $1<p<\frac{n+2}{n-2}$ if $n\geq 3$. Then, there exists $c_0>0$ such that for $c \geq c_0$,
$$\|pu_\infty^{p-1}(P_\infty(D)^{-1}-P_c(D)^{-1})\mathcal{A}^{-1}\|_{\mathcal{L}(L_r^2\cap L^q)}\leq\frac{1}{2},$$
where $\mathcal{L}(X)=\mathcal{L}(X;X)$.
\end{lem}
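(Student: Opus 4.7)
The plan is to factor the operator in question as a composition of three pieces,
\[
T_c := p u_\infty^{p-1}\,\bigl(P_\infty(D)^{-1}-P_c(D)^{-1}\bigr)\,\mathcal{A}^{-1},
\]
and bound each factor so that the $c$-dependence lives only in the middle factor, where it decays like $1/c^2$ thanks to Theorem \ref{thm difference}. Since $\mathcal{A}$, $P_\infty(D)$, $P_c(D)$, and multiplication by the radial function $u_\infty^{p-1}$ all preserve radial symmetry, the composition stays within $L_r^2 \cap L^q$ at every stage.

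First I would invoke Lemma \ref{A invertible} to produce a constant $M_q$, independent of $c$, with
\[
\|\mathcal{A}^{-1}\|_{\mathcal{L}(L_r^2)} + \|\mathcal{A}^{-1}\|_{\mathcal{L}(L_r^2 \cap L^q)} \leq M_q.
\]
Second, since $u_\infty$ is the smooth, exponentially decaying ground state of $-\Delta u + u = u^p$, it lies in $L^\infty$, and hence multiplication by $p u_\infty^{p-1}$ is bounded on every $L^r$ with norm $\leq p\|u_\infty\|_{L^\infty}^{p-1}$. Third, and this is the essential quantitative input, Theorem \ref{thm difference}(1) supplies for each $1 < r < \infty$ a constant $C_{r,n}$ such that
\[
\bigl\|\bigl(P_\infty(D)^{-1}-P_c(D)^{-1}\bigr)h\bigr\|_{L^r} \leq \frac{C_{r,n}}{c^2}\,\|h\|_{L^r}\qquad \text{for all } c \geq 2.
\]

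Applying these three bounds successively in the $L^2$-norm and then in the $L^q$-norm produces a constant $\tilde C_{q,n}$, independent of $c \geq 2$, with
\[
\|T_c f\|_{L^2} + \|T_c f\|_{L^q} \leq \frac{\tilde C_{q,n}}{c^2}\bigl(\|f\|_{L^2}+\|f\|_{L^q}\bigr).
\]
Choosing $c_0 := \max\{2,\sqrt{2\tilde C_{q,n}}\}$ then yields the required operator-norm bound by $1/2$ for every $c \geq c_0$. I do not anticipate any real obstacle: the quantitative decay in $c$ has been isolated in Theorem \ref{thm difference}, and the qualitative invertibility of $\mathcal{A}$ on $L_r^2 \cap L^q$ was already established in Lemma \ref{A invertible}. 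The hypothesis on $p$ enters only through the existence, radial symmetry, smoothness, and $L^\infty$-boundedness of the ground state $u_\infty$, all of which hold in the stated range.
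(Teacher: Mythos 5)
Your proposal is correct and follows essentially the same route as the paper: factor $T_c$ into $\mathcal{A}^{-1}$, the difference of resolvents, and multiplication by $pu_\infty^{p-1}$, bound the first via Lemma \ref{A invertible}, the second via Theorem \ref{thm difference}(1) (giving the $O(1/c^2)$ decay), and the third by $p\|u_\infty\|_{L^\infty}^{p-1}$, then choose $c_0$ accordingly. No substantive differences.
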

\begin{proof}
By a trivial inequality, we write
\begin{equation}\label{eq-7-3}
\begin{split}
&\|pu_\infty^{p-1}(P_\infty(D)^{-1}-P_c(D)^{-1})\mathcal{A}^{-1}\|_{\mathcal{L}(L_r^2\cap L^q)}\\
&\leq p\|u_\infty^{p-1}\|_{\mathcal{L}(L_r^2\cap L^q)}\|P_\infty(D)^{-1}-P_c(D)^{-1}\|_{\mathcal{L}(L_r^2\cap L^q)}\|\mathcal{A}^{-1}\|_{\mathcal{L}(L_r^2\cap L^q)}.
\end{split}
\end{equation}
By H\"older inequality, we have 
$$\|u_\infty^{p-1}\|_{\mathcal{L}(L_r^2\cap L^q)}=\|u_\infty^{p-1}\|_{L^\infty}=\|u_{\infty}\|_{L^{\infty}}^{p-1}.$$ By \eqref{symbol estimate 1}, $\|P_\infty(D)^{-1}-P_c(D)^{-1}\|_{\mathcal{L}(L_r^2\cap L^q)}\leq\frac{C}{c^2}$. Moreover, by Lemma \ref{A invertible}, $\|\mathcal{A}^{-1}\|_{\mathcal{L}(L_r^2\cap L^q)}<\infty$. By inserting these estimates into \eqref{eq-7-3}, we prove the lemma for $c \geq c_0$ with a suitable choice $c_0 >0$.
\end{proof}

\begin{proof}[Proof of Proposition \ref{invertibility}]
By the expression \eqref{L_c,infty expansion} and the above lemmas, we can invert $\mathcal{L}_{c;\infty}$ for $c\geq c_0$, 
$$\mathcal{L}_{c;\infty}^{-1}=P_c(D)^{-1}\mathcal{A}^{-1}\Big\{\textup{Id}+pu_\infty^{p-1}(P_\infty(D)^{-1}-P_c(D)^{-1})\mathcal{A}^{-1}\Big\}^{-1}.$$
Moreover, by the lower bound in Theorem \ref{norm comparability} and Lemma \ref{A invertible} and \ref{invertibility lemma}, we have the bound, 
\begin{align*}
\|\mathcal{L}_{c;\infty}^{-1}\|_{\mathcal{L}(L_r^2\cap L^q; H_r^1\cap W^{1,q})}&\leq\|P_c(D)^{-1}\|_{\mathcal{L}(L_r^2\cap L^q; H_r^1\cap W^{1,q})}\|\mathcal{A}^{-1}\|_{\mathcal{L}(L_r^2\cap L^q)}\\
&\quad\cdot\Big\|\Big\{\textup{Id}+pu_\infty^{p-1}(P_\infty(D)^{-1}-P_c(D)^{-1})\mathcal{A}^{-1}\Big\}^{-1}\Big\|_{\mathcal{L}(L_r^2\cap L^q)}\\
&\leq C,
\end{align*}
where the implicit constant $C$ is independent of the choice of $c\geq c_0$.
\end{proof}

\subsection{First term bound in \eqref{contraction mapping}}
We now prove that we can make the term $\mathcal{R}_c=(\mathcal{L}_{c;\infty})^{-1}(P_\infty(D)-P_c(D))u_\infty$ arbitrarily small choosing large $c\geq 1$.

\begin{lem}[First term bound]\label{lem-2-2}
Let $2\leq q<\infty$.  Then, we have
$$
\|\mathcal{R}_c \|_{H_r^1\cap W^{1,q}}=\left\{\begin{aligned}
&O \left(\frac{1}{c}\right)&&\textup{ if }1<p\leq 2,\\
&O \left(\frac{1}{c^2}\right)&&\textup{ if }p>2.
\end{aligned}\right.$$
\end{lem}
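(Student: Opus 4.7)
The plan is to peel off $(\mathcal{L}_{c;\infty})^{-1}$ via Proposition \ref{invertibility}, which reduces the lemma to proving
\begin{equation*}
\|(P_\infty(D) - P_c(D))u_\infty\|_{L_r^2 \cap L^q} = O(1/c)\ \text{if } 1 < p \leq 2,\qquad O(1/c^2)\ \text{if } p > 2.
\end{equation*}
The central manipulation uses the equation $P_\infty(D) u_\infty = u_\infty^p$ together with commutativity of Fourier multipliers to rewrite
\begin{equation*}
(P_\infty(D) - P_c(D)) u_\infty = \bigl(I - P_c(D) P_\infty(D)^{-1}\bigr) u_\infty^p = P_c(D)\bigl[P_c(D)^{-1} - P_\infty(D)^{-1}\bigr] u_\infty^p.
\end{equation*}
The outer $P_c(D)$ is absorbed by the upper bound $\|P_c(D)f\|_{L^q} \leq C\|f\|_{W^{2,q}}$ from Theorem \ref{norm comparability}, so the task further reduces to estimating $\|[P_c(D)^{-1} - P_\infty(D)^{-1}] u_\infty^p\|_{W^{2,q}}$. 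Since $P_c(D)^{-1} - P_\infty(D)^{-1}$ is a Fourier multiplier that commutes with $\nabla^\alpha$, this $W^{2,q}$-norm is controlled by applying Theorem \ref{thm difference} to $u_\infty^p$ and its derivatives up to order two.

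In the range $p > 2$, the chain-rule expansion
\begin{equation*}
\nabla^2(u_\infty^p) = p(p-1) u_\infty^{p-2} (\nabla u_\infty)^{\otimes 2} + p u_\infty^{p-1} \nabla^2 u_\infty
\end{equation*}
involves only non-negative powers of $u_\infty$, so the smoothness and exponential decay of the non-relativistic ground state immediately give $u_\infty^p \in W^{2,q}$. Theorem \ref{thm difference}(1) then produces the factor $1/c^2$, yielding the $O(1/c^2)$ bound. For $1 < p \leq 2$, however, the negative power $u_\infty^{p-2}$ is unbounded as $u_\infty \to 0$, so we cannot afford to demand the full $W^{2,q}$-regularity of $u_\infty^p$. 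We instead invoke Theorem \ref{thm difference}(2), noting that $P_\infty(D)^{-1/2}\nabla^\alpha$ has bounded symbol for $|\alpha| \leq 1$ and---after factoring $P_\infty(D)^{-1/2}\partial_i \partial_j = \partial_i\bigl(P_\infty(D)^{-1/2}\partial_j\bigr)$---defines a bounded operator $W^{1,q} \to L^q$ for $|\alpha| = 2$; this yields a $1/c$ factor requiring only $u_\infty^p \in W^{1,q}$, which is immediate. The $L^2$ estimates follow in parallel from the $q = 2$ case of the same theorems.

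The main obstacle is this trade-off between the strength of Theorem \ref{thm difference} used and the regularity demanded of $u_\infty^p$. Gaining the faster $1/c^2$ decay via variant (1) requires $W^{2,q}$-regularity of $u_\infty^p$, which is transparent for $p > 2$ but breaks down naively for $1 < p \leq 2$ due to the negative power $u_\infty^{p-2}$ arising in the second-order chain rule. The dichotomy in the statement reflects exactly this regularity barrier.
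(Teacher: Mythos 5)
Your proof is correct but takes a genuinely different route from the paper's. After peeling off $(\mathcal{L}_{c;\infty})^{-1}$---both proofs start this way---the paper does \emph{not} invoke the equation $P_\infty(D)u_\infty = u_\infty^p$ at all. Instead it factors
$(P_\infty(D)-P_c(D))u_\infty = \frac{P_\infty(D)-P_c(D)}{P_\infty(D)P_c(D)}\,P_\infty(D)P_c(D)u_\infty$
(with $P_\infty(D)^{1/2}$ replacing $P_\infty(D)$ in the $O(1/c)$ branch), applies Theorem \ref{thm difference}, absorbs one $P_c(D)$ via the upper bound of Theorem \ref{norm comparability}, and then pays for the remaining derivatives by citing the known elliptic regularity $u_\infty\in H_r^{2+\lfloor p\rfloor}\cap W_r^{2+\lfloor p\rfloor,q}$, i.e.\ $W^{4,q}$ for $p>2$ and $W^{3,q}$ for $1<p<2$. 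Your manipulation uses the equation to transfer the multiplier from $u_\infty$ onto $u_\infty^p$, so that the regularity demand lands on $u_\infty^p$ rather than on $u_\infty$. The two are essentially dual: the regularity of $u_\infty$ the paper cites is obtained by one elliptic-bootstrap step from the regularity of $u_\infty^p$, and that step is exactly what you carry out explicitly. Both arguments end at the same requirements and deliver the same rates.

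One caveat on your closing remark. You attribute the $O(1/c)$-versus-$O(1/c^2)$ dichotomy to a genuine obstruction, claiming that the negative power $u_\infty^{p-2}$ is unbounded and hence $u_\infty^p\notin W^{2,q}$ for $1<p\le 2$. That is not correct: the ground state $u_\infty$ is strictly positive, smooth, and decays exponentially together with its gradient, so $u_\infty^{p-2}|\nabla u_\infty|^2\sim e^{-p|x|}$ decays; in fact $u_\infty^p\in C^\infty\cap W^{2,q}$ for every $p>1$. The slower rate for $1<p\le 2$ is simply what this argument (and the paper's) delivers given the regularity inputs that are used; the lemma does not assert sharpness. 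This does not affect the validity of your estimates, only the final interpretive sentence.
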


\begin{proof}
We recall that the non-relativistic ground state $u_\infty$ is contained in $H_r^{2+\lfloor p\rfloor}\cap W_r^{2+\lfloor p\rfloor, q}$ for all $q$, where $\lfloor p\rfloor$ is the largest integer less than or equal to $p$.  Thus, if $p>2$, then by Proposition \ref{invertibility}, \eqref{symbol estimate 1} and the upper bound in Theorem \ref{norm comparability}, we get
\begin{align*}
\|\mathcal{R}_c \|_{H_r^1\cap W^{1,q}}&\leq\|(\mathcal{L}_{c;\infty})^{-1}\|_{\mathcal{L}(L_r^2\cap L^q;H_r^1\cap W^{1,q})}\left\|\frac{P_\infty(D)-P_c(D)}{P_\infty(D)P_c(D)}\right\|_{\mathcal{L}(L_r^2\cap L^q)}\|P_\infty(D)P_c(D)u_\infty\|_{L_r^2\cap L^q}\\
&\leq \frac{C}{c^2} \|P_\infty(D)u_\infty\|_{H_r^2\cap W_r^{2,q}}\leq \frac{C}{c^2}\|u_\infty\|_{H_r^4\cap W_r^{4,q}}.
\end{align*}
Similarly, if $1<p<2$, then by Proposition \ref{invertibility}, \eqref{symbol estimate 2} and the upper bound in Theorem \ref{norm comparability}, we obtain
\begin{align*}
\|\mathcal{R}_c \|_{H_r^1\cap W^{1,q}}&\leq\|(\mathcal{L}_{c;\infty})^{-1}\|_{\mathcal{L}(L_r^2\cap L^q;H_r^1\cap W^{1,q})}\left\|\frac{P_\infty(D)-P_c(D)}{P_\infty(D)^{\frac{1}{2}}P_c(D)}\right\|_{\mathcal{L}(L_r^2\cap L^q)}\|P_\infty(D)^{\frac{1}{2}}P_c(D)u_\infty\|_{L_r^2\cap L^q}\\
&\leq \frac{C}{c} \|P_\infty(D)^{\frac{1}{2}}u_\infty\|_{H_r^2\cap W_r^{2,q}}\leq \frac{C}{c}\|u_\infty\|_{H_r^3\cap W_r^{3,q}}.
\end{align*}
\end{proof}

\subsection{Nonlinear term bounds in \eqref{contraction mapping}}\label{sec: nonlinear estimates}

Next, we establish the estimates for the nonlinear term $\mathcal{L}_{c;\infty}^{-1}\mathcal{Q}(w)$ for small $w$.
\begin{prop}[Nonlinear estimates]\label{H^1 nonlinear estimate}
Fix any $q>n$ and suppose that $0<\delta\leq\|u_\infty\|_{H^1}$. Then for $c\geq c_0$, where $c_0\geq 1$ is a large number from Proposition \ref{invertibility}, we have
\begin{align}\
\|\mathcal{L}_{c;\infty}^{-1}\mathcal{Q}(w)\|_{H_r^1\cap W^{1,q}}&\leq C\delta^{\min\{p,2\}},\label{eq: H^1 estimate}\\
\|\mathcal{L}_{c;\infty}^{-1}\mathcal{Q}(w)-\mathcal{L}_{c;\infty}^{-1}\mathcal{Q}(\tilde{w})\|_{H_r^1\cap W^{1,q}}&\leq C\delta^{\min\{p-1,1\}}\|w-\tilde{w}\|_{H_r^1\cap W^{1,q}}\label{eq: H^1 difference estimate}
\end{align}
for any $w,\tilde{w}\in H_r^1\cap W^{1,q}$ with $\|w\|_{H_r^1\cap W^{1,q}},\|\tilde{w}\|_{H_r^1\cap W^{1,q}}\leq\delta$.
\end{prop}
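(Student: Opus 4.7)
\medskip

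\noindent\textbf{Proof proposal for Proposition \ref{H^1 nonlinear estimate}.}

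The strategy is to apply Proposition \ref{invertibility} to transfer the $H^1_r\cap W^{1,q}$ estimates on $\mathcal{L}_{c;\infty}^{-1}\mathcal{Q}(w)$ to $L^2_r\cap L^q$ estimates on $\mathcal{Q}(w)$, uniformly in $c\geq c_0$. Since that proposition already yields
\begin{equation*}
\|\mathcal{L}_{c;\infty}^{-1} g\|_{H_r^1\cap W^{1,q}}\leq C\|g\|_{L_r^2\cap L^q}\qquad\text{for all } c\geq c_0,
\end{equation*}
both \eqref{eq: H^1 estimate} and \eqref{eq: H^1 difference estimate} reduce to purely pointwise/integral bounds on the nonlinear remainder $\mathcal{Q}(w)=F(u_\infty+w)-F(u_\infty)-F'(u_\infty)w$, where $F(s)=|s|^{p-1}s$. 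Throughout, I will repeatedly use that $q>n$ forces the Sobolev embedding $W^{1,q}(\R^n)\hookrightarrow L^\infty(\R^n)$, so that $\|w\|_{L^\infty}\leq C\|w\|_{W^{1,q}}\leq C\delta$, and by interpolation between $L^2$ and $L^\infty$ one has $\|w\|_{L^r}\leq C\|w\|_{H_r^1\cap W^{1,q}}$ for every $r\in[2,\infty]$.

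The key step is a pointwise Taylor-type bound on $\mathcal{Q}$, which splits according to the regularity of $F$. When $p\geq 2$, $F\in C^2$ with $|F''(s)|\leq C|s|^{p-2}$, so Taylor's theorem gives
\begin{equation*}
|\mathcal{Q}(w)|\leq C\bigl(\|u_\infty\|_{L^\infty}+\|w\|_{L^\infty}\bigr)^{p-2}|w|^2\leq C|w|^2,
\end{equation*}
the last inequality using $\delta\leq\|u_\infty\|_{H^1}$ and the boundedness of $u_\infty$. When $1<p<2$, $F'$ is merely $(p-1)$-H\"older continuous with $|F'(a)-F'(b)|\leq C|a-b|^{p-1}$, so writing $F(u_\infty+w)-F(u_\infty)=\int_0^1 F'(u_\infty+tw)w\,dt$ yields $|\mathcal{Q}(w)|\leq C|w|^p$. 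In both cases one obtains $|\mathcal{Q}(w)|\leq C|w|^{\min\{p,2\}}$, and then
\begin{equation*}
\|\mathcal{Q}(w)\|_{L^2\cap L^q}\leq C\bigl(\|w\|_{L^{2\min\{p,2\}}}^{\min\{p,2\}}+\|w\|_{L^{q\min\{p,2\}}}^{\min\{p,2\}}\bigr)\leq C\delta^{\min\{p,2\}},
\end{equation*}
establishing \eqref{eq: H^1 estimate}.

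For \eqref{eq: H^1 difference estimate} I would write $\mathcal{Q}(w)-\mathcal{Q}(\tilde w)=\int_0^1\bigl[F'(u_\infty+\tilde w+t(w-\tilde w))-F'(u_\infty)\bigr](w-\tilde w)\,dt$ and again distinguish the two cases. For $p\geq 2$, the mean-value inequality $|F'(a)-F'(b)|\leq C(|a|+|b|)^{p-2}|a-b|$ together with $|\tilde w+t(w-\tilde w)|\leq C\delta$ gives $|\mathcal{Q}(w)-\mathcal{Q}(\tilde w)|\leq C\delta|w-\tilde w|$. For $1<p<2$, the H\"older bound on $F'$ instead yields $|\mathcal{Q}(w)-\mathcal{Q}(\tilde w)|\leq C(|w|+|\tilde w|)^{p-1}|w-\tilde w|\leq C\delta^{p-1}|w-\tilde w|$ after taking $L^\infty$ norms of the factor in the middle. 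Taking $L^2\cap L^q$ norms and applying Proposition \ref{invertibility} then gives \eqref{eq: H^1 difference estimate}.

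The main obstacle will be the low-regularity regime $1<p<2$, where $F$ fails to be $C^2$, so the standard second-order Taylor expansion is unavailable and one must use the sharp $(p-1)$-H\"older bound on $F'$ (together with the fact that $\delta^{p-1}$ is the correct contraction rate, not $\delta$). Everything else is straightforward bookkeeping using Sobolev embedding and interpolation, once the uniform-in-$c$ bound on $\mathcal{L}_{c;\infty}^{-1}$ supplied by Proposition \ref{invertibility} is in hand.
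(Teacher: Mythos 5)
Your proposal is correct and follows essentially the same route as the paper: apply Proposition \ref{invertibility} to reduce to $L^2_r\cap L^q$ bounds on $\mathcal{Q}$, write the difference via the fundamental theorem of calculus, and split into the cases $p\geq 2$ (mean-value bound on $F'$) and $1<p<2$ ($(p-1)$-H\"older bound $||a|^{p-1}-|b|^{p-1}|\leq |a-b|^{p-1}$), then close with H\"older and the Sobolev embedding $H^1_r\cap W^{1,q}\hookrightarrow L^2_r\cap L^\infty$ for $q>n$. The only cosmetic difference is that the paper proves only \eqref{eq: H^1 difference estimate} and obtains \eqref{eq: H^1 estimate} by setting $\tilde w=0$ (using $\delta\leq\|u_\infty\|_{H^1}$ to absorb the extra $\delta^{\min\{p-1,1\}}$ into $\delta^{\min\{p,2\}}$), whereas you establish both directly; this is a negligible distinction.
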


\begin{proof}
It suffices to show the second inequality \eqref{eq: H^1 difference estimate} in the proposition, since the former inequality follows from the latter with $\tilde{w}=0$. 

By the definition \eqref{nonlinearity} and the fundamental theorem of calculus, we write
\begin{align*}
\mathcal{Q}(w)-\mathcal{Q}(\tilde{w})&=\Big\{|u_\infty+w|^{p-1}(u_\infty+w)-|u_\infty+\tilde{w}|^{p-1}(u_\infty+\tilde{w})\Big\}\\
&\quad\quad-p u_\infty^{p-1}(w-\tilde{w})\\
&=\int_0^1 \frac{d}{dt}\Big[|u_\infty+(1-t)\tilde{w}+tw|^{p-1}(u_\infty+(1-t)\tilde{w}+tw)\Big]dt\\
&\quad\quad-p u_\infty^{p-1}(w-\tilde{w})\\
&=p\int_0^1\big(|u_\infty+(1-t)\tilde{w}+tw|^{p-1}-u_\infty^{p-1}\big)(w-\tilde{w})dt.
\end{align*}
Suppose that $1<p\leq2$. Then, by the elementary inequality
$$||a|^\ell-|b|^\ell|\leq \big||a|-|b|\big|^\ell\leq |a-b|^\ell\quad\textup{ if }0<\ell<1,$$
we have
$$|\mathcal{Q}(w)-\mathcal{Q}(\tilde{w})|\leq C(|w|+|\tilde{w}|)^{p-1}|w-\tilde{w}|.$$
Thus, by Proposition \ref{invertibility} and the H\"older inequality and the Sobolev inequality $H_r^1\cap W^{1,q}\hookrightarrow L_r^2\cap L_r^\infty$, we prove that
\begin{equation}\label{nonlinear estimate proof 1}
\begin{aligned}
\|\mathcal{L}_{c;\infty}^{-1}\mathcal{Q}(w)-\mathcal{L}_{c;\infty}^{-1}\mathcal{Q}(\tilde{w})\|_{H_r^1\cap W^{1,q}}&\leq C\|\mathcal{Q}(w)-\mathcal{Q}(\tilde{w})\|_{L_r^2\cap L_r^{q}}\\
&\leq C\big(\|w\|_{H_r^1\cap W^{1,q}}+\|\tilde{w}\|_{H_r^1\cap W^{1,q}}\big)^{p-1}\|w-\tilde{w}\|_{H_r^1\cap W^{1,q}}.
\end{aligned}
\end{equation}
If $p>2$, using the fundamental theorem of calculus again, we find
$$|\mathcal{Q}(w)-\mathcal{Q}(\tilde{w})|\leq C(u_\infty+|w|+|\tilde{w}|)^{p-2}(|w|+|\tilde{w}|)|w-\tilde{w}|.$$
From this and estimating as above, we prove that
\begin{equation}\label{nonlinear estimate proof 2}
\begin{aligned}
\|\mathcal{L}_{c;\infty}^{-1}\mathcal{Q}(w)-\mathcal{L}_{c;\infty}^{-1}\mathcal{Q}(\tilde{w})\|_{H_r^1\cap W^{1,q}}&\leq C\|\mathcal{Q}(w)-\mathcal{Q}(\tilde{w})\|_{L_r^2\cap L_r^{q}}\\
&\leq C\|(u_\infty+|w|+|\tilde{w}|)^{p-2}(|w|+|\tilde{w}|)|w-\tilde{w}|\|_{L_r^2\cap L_r^{q}}\\
&\leq C\big(\|u_\infty\|_{H_r^1\cap W^{1,q}}+\|w\|_{H_r^1\cap W^{1,q}}+\|\tilde{w}\|_{H_r^1\cap W^{1,q}}\big)^{p-2}\\
&\quad\quad\cdot\big(\|w\|_{H_r^1\cap W^{1,q}}+\|\tilde{w}\|_{H_r^1\cap W^{1,q}}\big)\|w-\tilde{w}\|_{H_r^1\cap W^{1,q}}.
\end{aligned}
\end{equation}
Thus, the proposition is proved.
\end{proof}

\begin{rem}\label{Why Hormander-Mikhlin}
As mention in the introduction, if we only use the $L^2$-boundedness in Theorem \ref{thm difference} and \ref{norm comparability} based on the point-wise bounds on the symbols, then we can close the estimates \eqref{nonlinear estimate proof 1} and \eqref{nonlinear estimate proof 2} with $q=2$ only when $1<p\leq\frac{n}{n-2}$. The symbolic analysis in Section 2 allows us to employ the full range of the $W_r^{1,q}$-Sobolev norms in \eqref{nonlinear estimate proof 1} and \eqref{nonlinear estimate proof 2}, thus we can cover the full range of $p$, i.e., $1<p<\frac{n+2}{n-2}$, such that the ground state $u_\infty$ exists.
\end{rem}
\subsection{Construction of a solution to \eqref{eq w''}}\label{sec: construction of w}

Now we are ready to construct a solution to the equation \eqref{eq w''} near the ground state $u_\infty$.

\begin{prop}[Existence of a fixed point for $\Phi_c$]\label{existence and uniqueness}\mbox{~}
Fix any $q>n$. Then, given a sufficiently small $\delta>0$, there exists $c_0>0$ such that if $c \geq c_0$, then $\Phi_c$ has a unique fixed point in
$$B_\delta^1:= \left\{ w \in H_r^1\cap W^{1,q} : \|w\|_{H_r^1\cap W^{1,q}} \leq \delta\right\}.$$
\end{prop}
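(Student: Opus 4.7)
The plan is a standard application of the Banach fixed point theorem: given the ingredients already assembled (Lemma 3.3 for the ``source'' term $\mathcal{R}_c$ and Proposition 3.4 for the nonlinear term $\mathcal{L}_{c;\infty}^{-1}\mathcal{Q}$), the proof reduces to choosing $\delta$ and $c_0$ so that $\Phi_c$ both preserves $B_\delta^1$ and contracts on it.

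First I would verify that $\Phi_c$ maps $B_\delta^1$ into itself. For $w\in B_\delta^1$, the triangle inequality gives
\begin{equation*}
\|\Phi_c(w)\|_{H_r^1\cap W^{1,q}} \leq \|\mathcal{R}_c\|_{H_r^1\cap W^{1,q}} + \|\mathcal{L}_{c;\infty}^{-1}\mathcal{Q}(w)\|_{H_r^1\cap W^{1,q}}.
\end{equation*}
By Lemma \ref{lem-2-2} the first term is $O(1/c)$, and by \eqref{eq: H^1 estimate} in Proposition \ref{H^1 nonlinear estimate} the second term is bounded by $C\delta^{\min\{p,2\}}$. Since $\min\{p,2\}>1$, I can fix $\delta>0$ so small that $C\delta^{\min\{p,2\}}\leq \delta/2$, and then enlarge $c_0$ (beyond the threshold from Proposition \ref{invertibility}) so that $\|\mathcal{R}_c\|_{H_r^1\cap W^{1,q}}\leq \delta/2$ for every $c\geq c_0$. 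Combining gives $\|\Phi_c(w)\|_{H_r^1\cap W^{1,q}}\leq \delta$, i.e.\ $\Phi_c(B_\delta^1)\subset B_\delta^1$.

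Second I would establish the contraction. For $w,\tilde w\in B_\delta^1$, the term $\mathcal{R}_c$ cancels and \eqref{eq: H^1 difference estimate} yields
\begin{equation*}
\|\Phi_c(w)-\Phi_c(\tilde w)\|_{H_r^1\cap W^{1,q}} = \|\mathcal{L}_{c;\infty}^{-1}\mathcal{Q}(w)-\mathcal{L}_{c;\infty}^{-1}\mathcal{Q}(\tilde w)\|_{H_r^1\cap W^{1,q}} \leq C\delta^{\min\{p-1,1\}}\|w-\tilde w\|_{H_r^1\cap W^{1,q}}.
\end{equation*}
Since $\min\{p-1,1\}>0$, after further shrinking $\delta$ (if necessary) so that $C\delta^{\min\{p-1,1\}}\leq 1/2$, $\Phi_c$ becomes a strict contraction on the closed ball $B_\delta^1$ of the Banach space $H_r^1\cap W^{1,q}$ (equipped with the sum norm).

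Finally, the Banach fixed point theorem produces a unique fixed point $w_c\in B_\delta^1$, which is exactly the assertion. The only mild care needed is to make the two smallness choices in the right order: first pick $\delta$ to control both the super-linear size bound and the Lipschitz constant of $\mathcal{L}_{c;\infty}^{-1}\mathcal{Q}$, and then pick $c_0$ (depending on this $\delta$) to make $\|\mathcal{R}_c\|$ less than $\delta/2$. There is no real obstacle here; the substantive analytic content already sits in Proposition \ref{invertibility} (uniform invertibility of $\mathcal{L}_{c;\infty}$), Lemma \ref{lem-2-2}, and Proposition \ref{H^1 nonlinear estimate}, and this proposition is essentially the bookkeeping step that assembles them into a fixed point.
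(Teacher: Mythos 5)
Your proof is correct and follows essentially the same route as the paper: invoke Lemma~\ref{lem-2-2} and Proposition~\ref{H^1 nonlinear estimate} to show $\Phi_c$ maps $B_\delta^1$ to itself and is a contraction there, then apply the Banach fixed point theorem. Your explicit ordering of the smallness choices ($\delta$ first, then $c_0$) is a slightly more careful bookkeeping than the paper's phrasing but is not a different argument.
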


\begin{proof}
First, by Lemma \ref{lem-2-2}, we choose large $c_0\geq 1$ such that $\|\mathcal{R}_c \|_{H_r^1\cap W^{1,q}}\leq\frac{\delta}{2}$ for all $c\geq c_0$. Hence, it follows from Proposition \ref{H^1 nonlinear estimate} that if $w,\tilde{w}\in B_\delta^1$, then $\|\Phi_c(w)\|_{H_r^1\cap W^{1,q}}\leq\frac{\delta}{2}+C\delta^{\min\{p,2\}}\leq \delta$ and  $\|\Phi_c(w)-\Phi_c(\tilde{w})\|_{H_r^1\cap W^{1,q}}\leq C\delta^{\min\{p-1,1\}}\|w-\tilde{w}\|_{H_r^1\cap W^{1,q}}\leq\frac{1}{2}\|w-\tilde{w}\|_{H_r^1\cap W^{1,q}}$, provided that $C^{\min\{p-1,1\}}\delta\leq\frac{1}{2}$. Thus, we conclude that $\Phi_c$ has a unique fixed point in $B_\delta^1$.
\end{proof}

\subsection{Construction of a solution $u_c$ to \eqref{reduced eq-main}}\label{sec: properties}
We prove that $u_c=u_\infty+w$, where $w$ is given in Proposition \ref{existence and uniqueness}, is indeed a solution to the pseudo-relativistic Schr\"odinger equation \eqref{reduced eq-main}.
\begin{lem}[Construction of a solution to \eqref{reduced eq-main}]\label{construction of u_c}
$w$ is a solution to the equation \eqref{eq w''}, i.e., $w=\Phi_c(w)$ in $H_r^1\cap W^{1,q}$ for some $q>n$ if and only if $u_c=u_\infty+w$ solves \eqref{reduced eq-main}.
\end{lem}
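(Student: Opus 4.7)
The proof is essentially a direct algebraic unwinding of the definitions of $\Phi_c$, $\mathcal{L}_{c;\infty}$, $\mathcal{R}_c$, and $\mathcal{Q}$, together with the fact that $u_\infty$ solves the non-relativistic limit equation $P_\infty(D) u_\infty = u_\infty^p$. I expect no serious obstacles; the only care needed is to verify that every operator applied below acts between the correct function spaces, so that the manipulations are justified.

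For the forward direction, suppose $w \in H_r^1 \cap W^{1,q}$ satisfies $w = \Phi_c(w)$. By Proposition \ref{invertibility}, $\mathcal{L}_{c;\infty}$ is a genuine bijection $H_r^1 \cap W^{1,q} \to L_r^2 \cap L^q$ for $c \geq c_0$, so I may apply $\mathcal{L}_{c;\infty}$ to both sides of $w = \mathcal{R}_c + \mathcal{L}_{c;\infty}^{-1}\mathcal{Q}(w)$ to obtain
\begin{equation*}
\mathcal{L}_{c;\infty} w = (P_\infty(D) - P_c(D)) u_\infty + \mathcal{Q}(w).
\end{equation*}
Expanding $\mathcal{L}_{c;\infty} w = P_c(D) w - p u_\infty^{p-1} w$ and $\mathcal{Q}(w) = |u_\infty + w|^{p-1}(u_\infty + w) - u_\infty^p - p u_\infty^{p-1} w$, the linear terms $p u_\infty^{p-1} w$ cancel, leaving
\begin{equation*}
P_c(D) w = (P_\infty(D) - P_c(D)) u_\infty + |u_\infty + w|^{p-1}(u_\infty + w) - u_\infty^p.
\end{equation*}
Adding $P_c(D) u_\infty$ to both sides and using the non-relativistic ground state identity $P_\infty(D) u_\infty = u_\infty^p$ to cancel the remaining $u_\infty^p$ term yields
\begin{equation*}
P_c(D)(u_\infty + w) = |u_\infty + w|^{p-1}(u_\infty + w),
\end{equation*}
which is exactly \eqref{reduced eq-main} for $u_c = u_\infty + w$.

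For the converse, start from $P_c(D)(u_\infty + w) = |u_\infty + w|^{p-1}(u_\infty + w)$, subtract $P_c(D) u_\infty$ and add and subtract $P_\infty(D) u_\infty$ together with $p u_\infty^{p-1} w$, then use $P_\infty(D) u_\infty = u_\infty^p$ to reassemble the identity $\mathcal{L}_{c;\infty} w = (P_\infty(D) - P_c(D)) u_\infty + \mathcal{Q}(w)$. Applying $\mathcal{L}_{c;\infty}^{-1}$ (again by Proposition \ref{invertibility}) gives $w = \Phi_c(w)$. Because $u_\infty \in H_r^1 \cap W^{1,q}$ and $w \in H_r^1 \cap W^{1,q}$, Theorem \ref{norm comparability} and the upper-bound half of Proposition \ref{invertibility} ensure every quantity lives in the space required for these rearrangements to be legitimate, so the equivalence holds.
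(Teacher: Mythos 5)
Your proof is correct and follows essentially the same route as the paper's: apply $\mathcal{L}_{c;\infty}$ to $w = \Phi_c(w)$, expand the definitions of $\mathcal{L}_{c;\infty}$, $\mathcal{R}_c$, and $\mathcal{Q}$, and use the ground-state identity $P_\infty(D)u_\infty = u_\infty^p$ to reach $P_c(D)u_c = |u_c|^{p-1}u_c$. You are in fact slightly more thorough than the paper, which only writes out the forward implication and leaves the (trivial) converse implicit.
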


\begin{proof}
It is proved in Proposition \ref{existence and uniqueness} that  $w=\mathcal{R}_c+(\mathcal{L}_{c;\infty})^{-1}\mathcal{Q}(w)$ in $H_r^1\cap W^{1,q}$, and so
\begin{align*}
0&=\mathcal{L}_{c;\infty}w-\mathcal{L}_{c;\infty}\mathcal{R}_c-\mathcal{Q}(w)\\
&=(P_c(D)-pu_\infty^{p-1})w-(P_\infty(D)-P_c(D))u_\infty-\big(|u_c|^{p-1}u_c-u_\infty^p-pu_\infty^{p-1}w\big)\\
&=P_c(D)w-pu_\infty^{p-1}w-P_\infty(D)u_\infty+P_c(D)u_\infty-|u_c|^{p-1}u_c+u_\infty^p+pu_\infty^{p-1}w\\
&=P_c(D)u_c-|u_c|^{p-1}u_c.
\end{align*}
Thus $u_c$ is a solution to \eqref{reduced eq-main}.
\end{proof}

\section{Non-existence result}\label{sec-4}

We prove our non-existence theorem (Theorem \ref{thm-3}). By scaling (see Section \ref{algebra}), we may take $m=\frac{1}{2}$ and $\mu = 1$. It suffices to show non-existence assuming that $1 \geq \frac{c^2}{2}$ and $p \geq \frac{n+1}{n-1}$, or that $1 < \frac{c^2}{2}$ and $p \geq \frac{n+2}{n-2}$.

Let $u_c \in H^{1/2}(\R^n) \cap L^\infty(\R^n)$ a solution to the pseudo-relativistic NLS \eqref{eq-main}, which will be shown to be zero in the end. Then, it has a unique extension $U(x,t) \in H^1(\R^{n+1}_+)$ to the upper half-plane $\R^{n+1}_+=\{(x,t): x\in\mathbb{R}^n \textup{ and }t>0\}$ such that 
\begin{equation}\label{R NLS ext}
\left\{\begin{aligned}
\left(-c^2\Delta_{(x,t)} +\frac{c^4}{4}\right)U(x,t) &= 0&& \text{in } \R^{n+1}_+, \\
 U(x,0) &= u_c(x) &&\text{in } \R^n
\end{aligned}\right.
\end{equation}
and
\[-c\frac{\partial}{\partial t}U(x,0) = \sqrt{-c^2\Delta_x+\frac{c^4}{4}}\, u_c(x)\]
in distribution sense, which immediately implies that
\[-c\frac{\partial}{\partial t}U(x,0) = \left(\frac{c^2}{2}-1\right)U(x,0) +|U|^{p-1}U(x,0)\]
because $u_c(x)=U(x,0)$ solves \eqref{eq-main} (see \cite{CS, CN2}). Since $u_c \in L^\infty(\R^n)$, by the maximum principle, we have $U \in L^\infty(\R^{n+1}_+)$. 
Then, it follows from the standard elliptic regularity estimates that $U \in C^{\alpha}(\overline{\R^{n+1}_+})$ for some $\alpha > 0$. In particular, $U$ is continuous up to the boundary $\partial\R^{n+1}_+ = \R^n$. Moreover, the extension $U$ satisfies the Pohozaev-type identities.
\begin{lem}Let $U \in H^{1/2} (\mathbb{R}^{n+1}_{+})$ be a solution to \eqref{R NLS ext}. Then we have the following identities.
\begin{equation}\label{Nehari}
\begin{aligned}
&\int_{\R^{n+1}_+}c^2|\nabla U(x,t)|^2dxdt +\int_{\R^{n+1}_+}\frac{c^4}{4}|U(x,t)|^2\,dxdt\\
&=\left(\frac{c^2}{2}-1\right)\int_{\R^n}c|U(x,0)|^2\,dx+\int_{\R^n}c|U(x,0)|^{p+1}\,dx,
\end{aligned}
\end{equation}
\begin{equation}\label{Poho1}
\begin{aligned}
&\frac{n-1}{2}\int_{\R^{n+1}_+}c^2|\nabla U(x,t)|^2dxdt +\frac{n+1}{2}\int_{\R^{n+1}_+}\frac{c^4}{4}|U(x,t)|^2\,dxdt\\
&=\left(\frac{c^2}{2}-1\right)\frac{n}{2}\int_{\R^n}c|U(x,0)|^2\,dx+\frac{n}{p+1}\int_{\R^n}c|U(x,0)|^{p+1}\,dx,
\end{aligned}
\end{equation}
and
\begin{equation}\label{Poho2}
\begin{aligned}
&\frac{n-2}{2}\int_{\R^{n+1}_+}c^2|\nabla_x U(x,t)|^2dxdt +\frac{n}{2}\int_{\R^{n+1}_+}c^2|\partial_t U(x,t)|^2dxdt +\frac{n}{2}\int_{\R^{n+1}_+}\frac{c^4}{4}|U(x,t)|^2\,dxdt\\
&=\left(\frac{c^2}{2}-1\right)\frac{n}{2}\int_{\R^n}c|U(x,0)|^2\,dx+\frac{n}{p+1}\int_{\R^n}c|U(x,0)|^{p+1}\,dx.
\end{aligned}
\end{equation}
\end{lem}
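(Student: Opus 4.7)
The plan is to derive each identity by testing the extension equation $-c^2\Delta_{(x,t)}U+\frac{c^4}{4}U=0$ against a suitably chosen multiplier, integrating by parts on $\R^{n+1}_+$, and converting the resulting boundary trace $\partial_t U(x,0)$ into the right-hand side via the nonlinear Robin-type condition
\[
-c\,\partial_t U(x,0)=\left(\frac{c^2}{2}-1\right)u_c(x)+|u_c(x)|^{p-1}u_c(x).
\]
The outward unit normal to $\partial\R^{n+1}_+=\{t=0\}$ is $\nu=(0,\ldots,0,-1)$, so in particular, writing $z=(x,t)\in\R^{n+1}$, one has $z\cdot\nu=-t\equiv 0$ on $\{t=0\}$; this geometric fact is what kills several of the boundary contributions in the Pohozaev-type computations below.

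For \eqref{Nehari} I would test against $U$ itself: the bulk integration by parts produces $c^2\int|\nabla U|^2+\frac{c^4}{4}\int U^2$, and the only surviving boundary contribution is $c^2\int_{\R^n}U(x,0)\partial_tU(x,0)\,dx$, which the Robin condition immediately converts into $(\frac{c^2}{2}-1)\int c\,u_c^2+\int c|u_c|^{p+1}$. For \eqref{Poho1} the natural multiplier is the full dilation generator $z\cdot\nabla U=x\cdot\nabla_xU+t\,\partial_tU$; one uses the standard pointwise identities
\[
\sum_i\partial_iU\,\partial_i(z\cdot\nabla U)=|\nabla U|^2+\tfrac12\,z\cdot\nabla|\nabla U|^2,\qquad U\,(z\cdot\nabla U)=\tfrac12\,z\cdot\nabla U^2,
\]
and then integrates by parts. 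The ``pure'' boundary terms involving $|\nabla U|^2(z\cdot\nu)$ and $U^2(z\cdot\nu)$ both vanish by the observation above, and only the cross-term $-c^2\int_{\R^n}\partial_tU(x,0)\,(x\cdot\nabla_xu_c)\,dx$ survives on $\{t=0\}$. Substituting the Robin condition and integrating by parts in $x$ via
\[
\int_{\R^n}u_c\,(x\cdot\nabla_xu_c)\,dx=-\tfrac{n}{2}\int u_c^2,\qquad \int_{\R^n}|u_c|^{p-1}u_c\,(x\cdot\nabla_xu_c)\,dx=-\tfrac{n}{p+1}\int|u_c|^{p+1},
\]
one then recovers precisely the right-hand side of \eqref{Poho1}.

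For the anisotropic identity \eqref{Poho2} I would test against the horizontal generator $x\cdot\nabla_xU$ alone: integration by parts in $x$ contributes $\frac{n-2}{2}c^2\int|\nabla_xU|^2$, integrating in $t$ contributes $\frac{n}{2}c^2\int|\partial_tU|^2$ (there is no $t=0$ boundary trace from the $x$-multiplier itself), the zeroth-order term gives $\frac{n}{2}\cdot\frac{c^4}{4}\int U^2$, and the single surviving boundary contribution is once again $-c^2\int_{\R^n}\partial_tU(x,0)\,(x\cdot\nabla_xu_c)\,dx$, handled exactly as for \eqref{Poho1}. An equivalent (and bookkeeping-cheaper) route is to combine \eqref{Poho1} with the auxiliary identity obtained by multiplying the equation by $t\,\partial_tU$, whose entire boundary trace vanishes because of the factor $t$, and which yields the horizontal/vertical relation $c^2\int|\partial_tU|^2=c^2\int|\nabla_xU|^2+\frac{c^4}{4}\int U^2$.

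The main obstacle is the rigorous justification of these formal manipulations, since $U$ is only assumed to lie in $H^1(\R^{n+1}_+)\cap L^\infty$. However, $U$ solves the constant-coefficient Yukawa equation $-\Delta U+\frac{c^2}{4}U=0$ in the open half-space, so interior elliptic regularity makes $U$ real-analytic there; together with the H\"older continuity up to the boundary already recorded in the text, and with the exponential decay $|U(z)|+|\nabla U(z)|\lesssim e^{-\kappa|z|}$ as $|z|\to\infty$ enjoyed by bounded $H^1$ solutions of this screened equation, every integrand is absolutely integrable and all boundary contributions at infinity vanish. The three identities are then made rigorous by the standard cutoff argument with $\phi(z/R)$ followed by $R\to\infty$.
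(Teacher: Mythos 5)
Your approach is essentially identical to the paper's: both multiply the extended equation \eqref{R NLS ext} by the three test functions $U$, $(x,t)\cdot\nabla U$ and $x\cdot\nabla_x U$, integrate by parts on the upper half-space, and convert the resulting traces of $\partial_t U$ on $\{t=0\}$ into the right-hand side via the Robin-type condition. Your pointwise identities, the observation that $(x,t)\cdot\nu=-t$ vanishes on $\{t=0\}$ so that the pure boundary terms drop out, and the final bookkeeping for all three identities are all correct, as is the observation that \eqref{Poho2} also follows from \eqref{Poho1} combined with the auxiliary $t\,\partial_t U$ identity $c^2\int|\partial_t U|^2 = c^2\int|\nabla_x U|^2 + \tfrac{c^4}{4}\int|U|^2$.

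One point in your justification is not correct as stated. You assert that bounded $H^1$ solutions of the screened equation $-\Delta U + \tfrac{c^2}{4}U=0$ in $\R^{n+1}_+$ decay exponentially, $|U(z)|+|\nabla U(z)|\lesssim e^{-\kappa|z|}$. That cannot be inferred from the hypotheses $u_c\in H^{1/2}(\R^n)\cap L^\infty(\R^n)$: the Poisson-type kernel for the extension is exponentially localized, but the convolution with a boundary datum that merely lies in $H^{1/2}\cap L^\infty$ inherits at best the (possibly only polynomial) decay of $u_c$, so no uniform exponential bound for $U$ is available at this stage. This matters because the dilation multiplier $z\cdot\nabla U$ grows linearly, so the boundary contributions at $|z|=R$ are not controlled by $H^1$ integrability alone for every $R$. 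The paper sidesteps this exactly by integrating on half-balls $B^{n+1}_+(0,R)$ and passing to the limit along a \emph{suitable} sequence $R_j\to\infty$: since the relevant weighted surface integrals have finite radial integral, one can always extract a sequence $R_j$ along which they tend to zero. Replacing your exponential-decay claim with this averaging/subsequence device (or, equivalently, a cutoff $\phi(z/R)$ followed by choosing $R_j$ so that the error terms vanish) would make the argument rigorous under the stated regularity, matching the paper's intended proof.
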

\begin{proof}
We multiply \eqref{R NLS ext} by the three test functions $U(x,t)$, $(x,t)\cdot\nabla U(x,t)$ and $x\cdot\nabla_x U(x,t)$, and integrate on the $(n+1)$-dimensional upper half ball $B^{n+1}_+(0,R)$ of radius $R > 0$ and centered at $0$.
Then, by integration by parts, we get the above three integral identities, after taking the limit under a suitable choice of the sequence $R_j \to \infty$. We omit the details of this procedure, because it is quite standard in the literature.
Here, we note that continuity of $U$ is required to guarantee that the boundary integral terms, which appear
whenever we do integration by parts, are well defined. 
\end{proof}

We also recall the following trace inequality.
\begin{lem}\label{trace-ineq}
For every $U \in H^1(\R^{n+1}_+)$, we have
\[
\int_{\R^n}|U(x,0)|^2\,dx \leq 2\|U\|_{L^2(\R^{n+1}_+)}\|\partial_tU\|_{L^2(\R^{n+1}_+)}.
\]
\end{lem}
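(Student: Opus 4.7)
The plan is to prove the trace inequality by a direct one-dimensional fundamental-theorem-of-calculus argument in the $t$-variable, followed by Cauchy--Schwarz and an approximation step. This is a classical estimate; the key is to justify the boundary value $U(x,0)$ using only the $H^1(\R^{n+1}_+)$ structure.

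First I would assume $U \in C^\infty(\overline{\R^{n+1}_+})$ with compact support in $\overline{\R^{n+1}_+}$, so that all manipulations are rigorous. For such $U$, for a.e.\ $x \in \R^n$,
\begin{equation*}
|U(x,0)|^2 = -\int_0^\infty \frac{\partial}{\partial t}|U(x,t)|^2\,dt = -2\int_0^\infty U(x,t)\,\partial_t U(x,t)\,dt,
\end{equation*}
since $U(x,t)$ vanishes for $t$ large. Taking absolute values gives
\begin{equation*}
|U(x,0)|^2 \leq 2\int_0^\infty |U(x,t)|\,|\partial_t U(x,t)|\,dt.
\end{equation*}
Next I would integrate in $x$ over $\R^n$, apply Tonelli, and then Cauchy--Schwarz on $\R^{n+1}_+$ to obtain
\begin{equation*}
\int_{\R^n}|U(x,0)|^2\,dx \leq 2\int_{\R^{n+1}_+}|U|\,|\partial_t U|\,dx\,dt \leq 2\|U\|_{L^2(\R^{n+1}_+)}\|\partial_t U\|_{L^2(\R^{n+1}_+)}.
\end{equation*}

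Finally I would remove the smoothness/compact-support assumption by a standard density argument: $C_c^\infty(\overline{\R^{n+1}_+})$ is dense in $H^1(\R^{n+1}_+)$, and the trace operator $U \mapsto U(\cdot,0)$ extends continuously to $H^1(\R^{n+1}_+) \to H^{1/2}(\R^n) \hookrightarrow L^2(\R^n)$, so both sides of the inequality pass to the limit. The main (and only) obstacle I anticipate is the density/trace step, which is entirely standard and can simply be cited from any textbook on Sobolev spaces; the computation itself is a one-line application of the fundamental theorem of calculus and Cauchy--Schwarz.
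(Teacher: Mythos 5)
Your proof is correct and takes essentially the same route as the paper: reduce by density to $C_c^\infty(\overline{\R^{n+1}_+})$, apply the fundamental theorem of calculus in $t$, and then Cauchy--Schwarz (which the paper calls H\"older) over the half-space. Your write-up is slightly more explicit about the Tonelli step and the continuity of the trace operator, but these are just details the paper leaves implicit.
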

\begin{proof}
From the density argument, we may assume that $U \in C^\infty_c(\overline{\R^{n+1}_+})$. 
Observe
\[
|U(x,0)|^2= -\int_0^\infty\partial_t\left(|U(x,t)|^2\right)\,dt \leq 2\int_0^\infty|U(x,t)||\partial_tU(x,t)|\,dt.
\]
Then one has from the H\"older inequality
\[\int_{\R^n}|U(x,0)|^2\,dx \leq 2\int_{\R^n}\int_0^\infty|U(x,t)||\partial_tU(x,t)|\,dtdx
\leq 2\|U\|_{L^2(\R^{n+1}_+)}\|\partial_tU\|_{L^2(\R^{n+1}_+)}.
\]
\end{proof}

Using the Pohozaev-type identities and the trace inequality, we prove non-existence.

\begin{proof}[Proof of Theorem \ref{thm-3}]
Suppose that $1 \geq \frac{c^2}{2}$ and $p \geq \frac{n+1}{n-1}$. Then, combining \eqref{Nehari} and \eqref{Poho1} to cancel out the last term, we obtain that
\begin{equation}\label{equ1}
\begin{aligned}
&\left(\frac{n-1}{2}-\frac{n}{p+1}\right)\int_{\R^{n+1}_+}c^2|\nabla U(x,t)|^2dxdt +\left(\frac{n+1}{2}-\frac{n}{p+1}\right)\int_{\R^{n+1}_+}\frac{c^4}{4}|U(x,t)|^2\,dxdt\\
&=\left(\frac{c^2}{2}-1\right)\left(\frac{n}{2}-\frac{n}{p+1}\right)\int_{\R^n}c|U(x,0)|^2\,dx.
\end{aligned}
\end{equation}
Thus, it follows that
\begin{equation*}
\int_{\R^{n+1}_+}|U(x,t)|^2\,dxdt=0,
\end{equation*}
because $\frac{n-1}{2}-\frac{n}{p+1}\geq 0$, $\frac{n+1}{2}-\frac{n}{p+1}>0$ and $(\frac{c^2}{2}-1)(\frac{n}{2}-\frac{n}{p+1})\leq 0$. Consequently, $u_c(x) = U(x,0)$ is identically zero by the continuity of $U$.

Now we assume that $1 < \frac{c^2}{2}$ and $p \geq \frac{n+2}{n-2}$. Then, combining \eqref{Nehari} and \eqref{Poho2}, we obtain that
\begin{equation}\label{equ2}
\begin{split}
&\left(\frac{n-2}{2}-\frac{n}{p+1}\right)\int_{\R^{n+1}_+}c^2|\nabla_x U(x,t)|^2\,dxdt
+\left(\frac{n}{2}-\frac{n}{p+1}\right)\int_{\R^{n+1}_+}c^2|\partial_tU(x,t)|^2\,dxdt \\
&\qquad+\left(\frac{n}{2}-\frac{n}{p+1}\right)\int_{\R^{n+1}_+}\frac{c^4}{4}|U(x,t)|^2\,dxdt
\\
&=\left(\frac{c^2}{2}-1\right)\left(\frac{n}{2}-\frac{n}{p+1}\right)\int_{\R^n}c|U(x,0)|^2\,dx.
\end{split}
\end{equation}
We see from Lemma \ref{trace-ineq} and Young's inequality that
\[
\begin{aligned}
\left(\frac{c^2}{2}-1\right)\int_{\R^n}c|U(x,0)|^2\,dx &\leq 
2\left(\frac{c^2}{2}-1\right)\|U\|_{L^2(\R^{n+1}_+)}c\|\partial_tU\|_{L^2(\R^{n+1}_+)} \\
& \leq \left(\frac{c^2}{2}-1\right)^2\int_{\R^{n+1}_+}|U(x,t)|^2\,dxdt + \int_{\R^{n+1}_+}c^2|\partial_tU(x,t)|^2\,dxdt.
\end{aligned}
\]
Inserting this to \eqref{equ2}, 
\begin{equation*}
\begin{split}
&\left(\frac{n-2}{2}-\frac{n}{p+1}\right)\int_{\R^{n+1}_+}c^2|\nabla_x U(x,t)|^2\,dxdt+\left(\frac{n}{2}-\frac{n}{p+1}\right)\int_{\R^{n+1}_+}\frac{c^4}{4} |U(x,t)|^2\,dxdt
\\
& \leq \left(\frac{n}{2}-\frac{n}{p+1}\right)\left(\frac{c^2}{2}-1\right)^2\int_{\R^{n+1}_+}|U(x,t)|^2\,dxdt.\\
\end{split}
\end{equation*}
From this, by the assumption, we finally deduce
\[
(c^2-1)\int_{\R^{n+1}_+}|U(x,t)|^2\,dxdt = 0.
\]
This again implies that $u_c(x)$ is identically zero. 
\end{proof}

\section{Concluding remarks}\label{sec-5}

In this section, we present some properties of the solution $u_c$ to the pseudo-relativistic NLS \eqref{eq-main}, constructed in Section \ref{sec: existence result}. Throughout this section, we assume that
\begin{align*}
\left\{
\begin{aligned}
&1 < p<\infty&&\textup{if }n=1,2,\\
&1 < p<\tfrac{n+2}{n-2}&&\textup{if }n\geq 3,
\end{aligned}
\right.
\end{align*}
and that $mc^2 \geq \kappa_0\mu$, where $\kappa_0=c_0^2$ and $c_0 \geq 1$ is a large constant given in in Section \ref{sec: existence result}.

First, we prove uniqueness of a solution to \eqref{eq-main} among radially symmetric functions near the ground state $u_\infty$ to the non-relativistic NLS \eqref{non-relativistic eq}.

\begin{prop}\label{uniqueness-small ball}
There exists some $\delta > 0$ such that a solution to \eqref{eq-main} is unique in
$$B_\delta(u_\infty) := \{ u \in H^1_r \cap L^\infty:\  \|u-u_\infty\|_{H^1\cap L^\infty} < \delta \}.$$
\end{prop}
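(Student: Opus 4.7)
The plan is to bootstrap the coarse $H^1\cap L^\infty$ smallness hypothesis into the finer $H^1_r\cap W^{1,q}$ smallness at which the contraction mapping of Proposition \ref{existence and uniqueness} operates, and then conclude by the uniqueness clause there. After the scaling reduction of \S\ref{algebra}, let $\tilde u\in B_\delta(u_\infty)$ be any radial solution of \eqref{reduced eq-main} and set $\tilde w:=\tilde u-u_\infty$, so that $\tilde w\in H^1_r$ with $\|\tilde w\|_{H^1\cap L^\infty}<\delta$. The algebraic rearrangement of \S\ref{reformulation} yields the identity
\[
P_c(D)\tilde w=(P_\infty(D)-P_c(D))u_\infty+\bigl(|u_\infty+\tilde w|^{p-1}(u_\infty+\tilde w)-u_\infty^p\bigr),
\]
and once $\tilde w$ is known to lie in $H^1_r\cap W^{1,q}$ for some $q>n$, Proposition \ref{invertibility} (applied through $\mathcal{L}_{c;\infty}^{-1}$) gives $\tilde w=\Phi_c(\tilde w)$ in that space.

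The key step is therefore a regularity/smallness upgrade. Fix $q>n$ and apply $P_c(D)^{-1}$ to the above identity: by the uniform lower bound in Theorem \ref{norm comparability},
\[
\|\tilde w\|_{W^{1,q}}\leq C\|(P_\infty(D)-P_c(D))u_\infty\|_{L^q}+C\bigl\|\,|u_\infty+\tilde w|^{p-1}(u_\infty+\tilde w)-u_\infty^p\bigr\|_{L^q}.
\]
The first term is $O(c^{-2})$ by Theorem \ref{thm difference}(1) applied to $f=P_\infty(D)P_c(D)u_\infty$, which lies in $L^q$ since the ground state $u_\infty$ is a Schwartz function (this is essentially the computation of Lemma \ref{lem-2-2}). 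For the second term, the pointwise inequality $||a|^{p-1}a-|b|^{p-1}b|\leq C_p(|a|+|b|)^{p-1}|a-b|$ (valid for any $p>1$), followed by the interpolation $\|\tilde w\|_{L^q}\leq\|\tilde w\|_{L^2}^{2/q}\|\tilde w\|_{L^\infty}^{1-2/q}\leq\|\tilde w\|_{H^1\cap L^\infty}$, gives a bound by $C(\|u_\infty\|_{L^\infty}+\delta)^{p-1}\delta$. Combining,
\[
\|\tilde w\|_{H^1\cap W^{1,q}}\leq C_*\delta+C_*c^{-2},
\]
with $C_*$ independent of $c\geq c_0$ and of $\delta\in(0,1]$.

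Choosing first $c_0$ large and then $\delta$ small enough that $C_*\delta+C_*c_0^{-2}$ is strictly less than the radius of the contraction ball $B^1_{\delta'}$ in Proposition \ref{existence and uniqueness}, one concludes that $\tilde w$ is a fixed point of $\Phi_c$ in that ball; since the $w$ constructed in Section \ref{sec: existence result} is another such fixed point, the uniqueness part of Proposition \ref{existence and uniqueness} forces $\tilde w=w$ and hence $\tilde u=u_c$. The principal obstacle is precisely this regularity bootstrap: the hypothesis only provides control in the coarse topology of $H^1\cap L^\infty$, whereas the fixed-point argument lives at the level of $H^1_r\cap W^{1,q}$ with $q>n$. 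Bridging the two is possible thanks to the uniform-in-$c$ inversion $P_c(D)^{-1}:L^q\to W^{1,q}$ supplied by Theorem \ref{norm comparability}; without it, as noted in Remark \ref{Why Hormander-Mikhlin}, one would be stuck at $L^2$-level estimates and could not reach the full $H^1$-subcritical range of $p$.
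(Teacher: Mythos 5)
Your proof is correct, and it is in fact more careful than the paper's own argument, which at the critical step simply asserts that the norms $\|\cdot\|_{H^1\cap L^\infty}$ and $\|\cdot\|_{H^1\cap W^{1,q}}$ are equivalent because $q>n$. As a statement about the two Banach spaces this is false: Morrey's embedding gives $W^{1,q}\hookrightarrow L^\infty$, so one inequality holds, but membership in $H^1\cap L^\infty$ yields no control of $\nabla u$ in $L^q$, and the reverse embedding fails. The missing ingredient is precisely the regularity/smallness bootstrap you carry out, namely applying $P_c(D)^{-1}$ to the identity
$$P_c(D)\tilde w=(P_\infty(D)-P_c(D))u_\infty+\bigl(|u_\infty+\tilde w|^{p-1}(u_\infty+\tilde w)-u_\infty^p\bigr),$$
invoking the uniform inversion from Theorem~\ref{norm comparability} and the symbol-difference estimate from Theorem~\ref{thm difference}, and bounding the nonlinear remainder by a pointwise estimate plus $L^2$--$L^\infty$ interpolation. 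This upgrades the coarse $H^1\cap L^\infty$ smallness to $H^1_r\cap W^{1,q}$ smallness, after which Lemma~\ref{construction of u_c} identifies $\tilde w$ as a fixed point of $\Phi_c$ and the uniqueness clause of Proposition~\ref{existence and uniqueness} forces $\tilde w=w$. Two small caveats worth noting: for $1<p\leq 2$ the linear remainder is only $O(c^{-1})$ rather than $O(c^{-2})$, since $u_\infty\in W^{3,q}$ only and one must use \eqref{symbol estimate 2} as in Lemma~\ref{lem-2-2} (harmless for the conclusion); and the order of choices ``first $c_0$ large, then $\delta$ small'' implicitly enlarges the $c_0$ fixed in Section~\ref{sec: existence result}, also harmless since $c_0$ is only constrained from below.
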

\begin{proof}
Let $u_c \in H^1_r \cap L^\infty$ be a solution to \eqref{eq-main}. Then by the argument in Lemma \ref{construction of u_c}, $w := u_c -u_\infty$ is a fixed point of the map $\Phi_c$. 
Then $w$ is a unique fixed point in a small ball $B_\delta(0)$ because $\Phi_c$ is a contraction map. 
Note that two norms $\|\cdot\|_{H^1\cap L^\infty}$ and $\|\cdot\|_{H^1\cap W^{1,q}}$ are equivalent since we assume $q > n$. 
This shows uniqueness of $u_c$ in a small ball $B_\delta(u_\infty)$. 
\end{proof}

We also obtain the rate of convergence for the non-relativistic limit $u_c\to u_\infty$.
\begin{prop}[Rate of convergence]\label{rate of convergence}
Let $u_c$ be the solution to \eqref{eq-main} constructed in Section \ref{sec: existence result}. 
Then, for any $q>n$, we have
$$\|u_c-u_\infty\|_{H^1\cap W^{1,q}}=\left\{\begin{aligned}
&O \left(\frac{1}{c}\right)&&\textup{ if }1<p\leq 2,\\
&O \left(\frac{1}{c^2}\right)&&\textup{ if }p>2.
\end{aligned}\right.$$
\end{prop}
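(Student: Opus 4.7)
The plan is to read off the rate directly from the fixed-point identity $w=\Phi_c(w)=\mathcal{R}_c+(\mathcal{L}_{c;\infty})^{-1}\mathcal{Q}(w)$ that is already established in Section \ref{sec: existence result}. By Lemma \ref{construction of u_c}, $u_c-u_\infty=w$ is precisely the fixed point produced in Proposition \ref{existence and uniqueness}, so it suffices to control $\|w\|_{H_r^1\cap W^{1,q}}$.

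First I would take the $H_r^1\cap W^{1,q}$-norm of the fixed-point identity and use the triangle inequality:
\begin{equation*}
\|w\|_{H_r^1\cap W^{1,q}}\leq \|\mathcal{R}_c\|_{H_r^1\cap W^{1,q}}+\|(\mathcal{L}_{c;\infty})^{-1}\mathcal{Q}(w)\|_{H_r^1\cap W^{1,q}}.
\end{equation*}
Lemma \ref{lem-2-2} already gives $\|\mathcal{R}_c\|_{H_r^1\cap W^{1,q}}=O(1/c)$ when $1<p\leq 2$ and $O(1/c^2)$ when $p>2$, which is exactly the rate we are after. So the task reduces to showing that the nonlinear remainder is absorbable into the left-hand side.

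Next I would apply Proposition \ref{H^1 nonlinear estimate} with $\tilde w=0$ and with the choice $\delta=\|w\|_{H_r^1\cap W^{1,q}}$ (which by Proposition \ref{existence and uniqueness} can be taken as small as we wish by choosing $c_0$ large). This yields
\begin{equation*}
\|(\mathcal{L}_{c;\infty})^{-1}\mathcal{Q}(w)\|_{H_r^1\cap W^{1,q}}\leq C\|w\|_{H_r^1\cap W^{1,q}}^{\min\{p,2\}}=C\|w\|_{H_r^1\cap W^{1,q}}^{\min\{p,2\}-1}\cdot\|w\|_{H_r^1\cap W^{1,q}}.
\end{equation*}
Since $\min\{p,2\}-1>0$ and $\|w\|_{H_r^1\cap W^{1,q}}\leq\delta$ is small once $c$ is large, the prefactor $C\|w\|^{\min\{p,2\}-1}$ can be made $\leq \tfrac{1}{2}$. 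Absorbing this term to the left-hand side then gives
\begin{equation*}
\|w\|_{H_r^1\cap W^{1,q}}\leq 2\|\mathcal{R}_c\|_{H_r^1\cap W^{1,q}},
\end{equation*}
which combined with Lemma \ref{lem-2-2} yields the claimed $O(1/c)$ or $O(1/c^2)$ rate.

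There is really no serious obstacle here once Proposition \ref{H^1 nonlinear estimate} and Lemma \ref{lem-2-2} are in hand; the only point requiring a little care is to verify that the smallness threshold for $\delta$ used in the absorption step is compatible with the smallness threshold chosen in Proposition \ref{existence and uniqueness} to obtain the contraction in the first place, which is automatic by possibly enlarging $c_0$.
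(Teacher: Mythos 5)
Your proof is correct and rests on the same two ingredients as the paper's, namely Lemma~\ref{lem-2-2} for $\|\mathcal{R}_c\|$ and Proposition~\ref{H^1 nonlinear estimate} for the nonlinear remainder. The only difference is in the wrapping: you bootstrap directly from the fixed-point identity $w=\mathcal{R}_c+\mathcal{L}_{c;\infty}^{-1}\mathcal{Q}(w)$ and absorb the super-linear term, whereas the paper re-runs the contraction argument on a shrinking ball of radius $A/c^a$ (chosen so that $\|\mathcal{R}_c\|_{H_r^1\cap W^{1,q}}\leq A/c^a$) and then invokes uniqueness of the fixed point; the two routes are essentially equivalent.
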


\begin{proof}
By Lemma \ref{lem-2-2}, we may choose $\delta=\frac{A}{c^a}$ such that $\|\mathcal{R}_c\|_{H_r^1\cap W^{1,q}}\leq\delta$, where $A>0$ is some large number and $a=1$ or $2$ depending on the rate in Lemma \ref{lem-2-2}. Then, repeating the proof of Proposition \ref{existence and uniqueness}, one can show that $\Phi_c$ is contractive on the $\frac{A}{c^a}$-ball for sufficiently large $c$. Let $\tilde{w}$ be the fixed point in the $\frac{A}{c^a}$-ball. Then, by uniqueness, the solution $\tilde{w}$ equals to the solution $w=u_c-u_\infty$ in Proposition \ref{existence and uniqueness}. Therefore, we conclude that the difference $u_c-u_\infty=\tilde{w}$ is in the ball of radius $\frac{A}{c^a}$.
\end{proof}

Combining the above two propositions, we conclude that the solution $u_c$, in Section \ref{sec: existence result}, is the only radially symmetric real-valued solution to the pseudo-relativistic NLS \eqref{eq-main} converging to the non-relativistic ground state $u_\infty$.
\begin{cor}\label{uniqueness-rate}
Let $\{u_c\}$ be a sequence of  solutions to \eqref{eq-main} in $H^1_r \cap L^\infty$ such that it converges to $u_\infty$ in $H^1_r \cap L^\infty$ as $c \to \infty$.
Then for sufficiently large $c\geq 1$, $u_c$ is unique, and it converges with the rate given in Proposition \ref{rate of convergence}. 
\end{cor}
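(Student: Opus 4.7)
The plan is to combine the local uniqueness result of Proposition \ref{uniqueness-small ball} with the rate of convergence of Proposition \ref{rate of convergence} by showing that, for large $c$, every element of the given sequence $\{u_c\}$ must coincide with the solution $u_c^{\textup{con}}$ constructed in Section \ref{sec: existence result}. Once this identification is made, both conclusions follow at once.

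The first step is to verify that the constructed solution $u_c^{\textup{con}}$ itself lies in the small neighborhood $B_\delta(u_\infty)$ used in Proposition \ref{uniqueness-small ball}, for $c$ large. Fix any $q > n$. By Proposition \ref{rate of convergence} (applied to $u_c^{\textup{con}}$), we have $\|u_c^{\textup{con}}-u_\infty\|_{H^1 \cap W^{1,q}} \to 0$ as $c\to\infty$. Since $q>n$, the Sobolev embedding $W^{1,q}(\mathbb{R}^n)\hookrightarrow L^\infty(\mathbb{R}^n)$ upgrades this to convergence in $H^1\cap L^\infty$, so that $u_c^{\textup{con}}\in B_\delta(u_\infty)$ for all sufficiently large $c$.

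Next, by the hypothesis on $\{u_c\}$, there exists $c_1\geq c_0$ such that $u_c\in B_\delta(u_\infty)$ whenever $c\geq c_1$. For such $c$, both $u_c$ and $u_c^{\textup{con}}$ are solutions to \eqref{eq-main} sitting in the ball $B_\delta(u_\infty)$ where Proposition \ref{uniqueness-small ball} applies. The local uniqueness statement then forces $u_c=u_c^{\textup{con}}$ for all $c\geq c_1$. This proves the uniqueness claim and, simultaneously, transfers the convergence rate of Proposition \ref{rate of convergence} from $u_c^{\textup{con}}$ to $u_c$.

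The only mildly delicate point is ensuring the $\delta$ appearing in Proposition \ref{uniqueness-small ball} can be used simultaneously for $u_c$ (approximating $u_\infty$ in the $H^1\cap L^\infty$-norm by hypothesis) and for $u_c^{\textup{con}}$ (approximating $u_\infty$ in the $H^1\cap W^{1,q}$-norm by Proposition \ref{rate of convergence}); this is resolved precisely by the Sobolev embedding $W^{1,q}\hookrightarrow L^\infty$ for $q>n$, which is already the reason the radius $q>n$ is imposed throughout Section \ref{sec: existence result}. Apart from this, the argument is a direct assembly of the two preceding propositions, so there is no substantive obstacle.
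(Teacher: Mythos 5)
Your proposal is correct and follows precisely the route the paper intends: the paper does not write out a separate proof but simply states the corollary as an immediate consequence of Propositions \ref{uniqueness-small ball} and \ref{rate of convergence}. Your explicit chain of reasoning --- using Proposition \ref{rate of convergence} together with the Sobolev embedding $W^{1,q}\hookrightarrow L^\infty$ (for $q>n$) to place the constructed solution in the uniqueness ball $B_\delta(u_\infty)$, placing the hypothesized $u_c$ there by assumption, and then invoking Proposition \ref{uniqueness-small ball} to force coincidence --- is exactly the intended deduction, and correctly yields both the uniqueness and the transferred convergence rate.
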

In \cite{CHS, CS}, the authors prove that in the $H^{1/2}$-subcritical range $1 < p < \tfrac{n+1}{n-1}$, a positive radial ground state to \eqref{eq-main} belongs to $H^1 \cap L^{\infty}$ and converges to $u_\infty$
so, by the uniqueness, our solution $u_c$ in this case is the same as the ground state to \eqref{eq-main} for large $c$.
By a \textit{ground state}, we mean a solution to \eqref{eq-main} which attains the minimum value of an associated functional $I_{c}$ among all nontrivial solutions, where
\begin{equation}\label{action functional}
I_{c}(u) = \frac12\int_{\R^n}\left( \sqrt{-c^2\Delta + m^2 c^4}-mc^2\right)u\overline{u} +\mu |u|^2\,dx -\frac{1}{p+1}\int_{\R^n}|u|^{p+1}\,dx
\end{equation}
Obviously, $u$ solves \eqref{eq-main} if and only if it is a critical point of the functional $I_{c}$, because \eqref{eq-main} is its Euler-Lagrange equation. 
Thus, a ground state $u_c$ can be rephrased as a critical point of the functional $I_{c}$ that minimizes the value of $I_{c}$ among all nontrivial critical points, that is,
\begin{equation}\label{minimization problem}
I_{c}(u_c)=\min_{v\in H^{1/2}}\Big\{I_{c}(v) ~|~ v \neq 0,\, I_{c}'(v) = 0\Big\}.
\end{equation}
Thus, Corollary \ref{uniqueness-rate} gives an alternative proof of uniqueness of a radially symmetric non-negative ground state to the $H^{1/2}$-subcritical pseudo-relativistic NLS \eqref{eq-main}.
\begin{cor}
If $1 < p <\frac{n+1}{n-1}$, then a positive radial ground state to \eqref{eq-main} is unique for sufficiently large $c$. 
\end{cor}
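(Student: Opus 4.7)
The plan is to combine the uniqueness-in-a-small-ball statement (Proposition \ref{uniqueness-small ball}, or equivalently Corollary \ref{uniqueness-rate}) with the already-known non-relativistic limit for ground states in the $H^{1/2}$-subcritical regime, which is exactly the input quoted from \cite{CHS, CS} immediately preceding the corollary.

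First, I would let $v_c$ denote any positive radial ground state of \eqref{eq-main}, in the sense of the minimization problem \eqref{minimization problem}. According to the cited results from \cite{CHS, CS}, in the $H^{1/2}$-subcritical regime $1 < p < \frac{n+1}{n-1}$ such a ground state lies in $H^1_r \cap L^\infty$ and, after the algebraic reduction of \S\ref{algebra}, converges to the unique non-relativistic ground state $u_\infty$ in $H^1_r \cap L^\infty$ as $c \to \infty$. Thus, any family of positive radial ground states $\{v_c\}$ is automatically a family of solutions to \eqref{eq-main} in $H^1_r \cap L^\infty$ converging to $u_\infty$.

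Second, I would apply Corollary \ref{uniqueness-rate} to this family $\{v_c\}$. The corollary guarantees that for all sufficiently large $c$, any solution to \eqref{eq-main} in $H^1_r \cap L^\infty$ converging to $u_\infty$ must coincide with the solution $u_c$ constructed in Section \ref{sec: existence result}. Therefore, if $v_c^{(1)}$ and $v_c^{(2)}$ are two positive radial ground states, both of them converge to $u_\infty$ by the previous step, so both eventually equal $u_c$, and hence $v_c^{(1)} = v_c^{(2)}$ for $c$ large enough. This is exactly uniqueness of the positive radial ground state for sufficiently large $c$.

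The argument is essentially a two-line consequence of the preceding results, and I do not foresee a real obstacle: the only non-trivial input is the $H^1 \cap L^\infty$-convergence of subcritical ground states to $u_\infty$, which is imported from \cite{CHS, CS}; everything else is Proposition \ref{uniqueness-small ball} applied verbatim. If one wanted a fully self-contained presentation, the point requiring care would be to check that the reduction $m = \tfrac12$, $\mu = 1$ of \S\ref{algebra} is compatible with the ground state definition \eqref{minimization problem}, i.e., that the scaling $u_c(x) = \mu^{1/(p-1)} v_c(\sqrt{2m\mu}\, x)$ preserves the ground state property (it does, since the functional $I_c$ scales covariantly and \eqref{minimization problem} depends only on critical values).
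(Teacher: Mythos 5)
Your proof is correct and follows the same route the paper takes: import from \cite{CHS, CS} the fact that positive radial ground states in the $H^{1/2}$-subcritical range converge to $u_\infty$ in $H^1_r\cap L^\infty$, then invoke the small-ball uniqueness of Proposition \ref{uniqueness-small ball} / Corollary \ref{uniqueness-rate} to force any two ground states to coincide with the constructed $u_c$ for large $c$. Your extra remark about the scaling of \S\ref{algebra} preserving the ground state property is a harmless elaboration that the paper leaves implicit.
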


We finally remark that a ground state, in the sense of \eqref{minimization problem}, is well-defined only when the nonlinearity is $H^{1/2}$-subcritical or critical, i.e., $1 < p \leq \frac{n+1}{n-1}$, since by the Sobolev embedding $H^{1/2}\hookrightarrow L^{p+1}$, the functional $I_{c}$ is well-defined and continuously differentiable on $H^{1/2}$ in this case. 
However, if we define a ground state (in a weak sense) as a minimizer of the action functional $I_c$ among all nontrivial solutions $v \in H^1\cap L^\infty$ to \eqref{eq-main},
then the meaning of a ground state may make sense even in the $H^{1/2}$-supercritical case $\frac{n+1}{n-1}<p < \frac{n+2}{n-2}$.
We strongly speculate that our solution $u_c$, constructed in Theorem \ref{existence}, would be a ground state in this sense.  This seems to be an interesting open question worth to be answered in a future work.

\end{document}